\newcommand{\Z}{\ensuremath{\mathbb{Z}}}
\newcommand{\C}{\ensuremath{\mathbb{C}}}
\newtheorem{thm}{Theorem}[section]
\newtheorem*{question*}{Question}
\newtheorem{cor}[thm]{Corollary}
\newtheorem{lemma}[thm]{Lemma}
\newtheorem{prop}[thm]{Proposition}
\theoremstyle{remark}
\newtheorem{rmk}[thm]{Remark}
\theoremstyle{definition}
\newtheorem{defin}[thm]{Definition}
\newtheorem{notation}[thm]{Notation}
\newtheorem{algo}[thm]{Algorithm} 
\newtheorem{ex}[thm]{Example}
\newcommand{\OO}{\ensuremath{\mathcal{O}}}
\newcommand{\on}{\operatorname}
\newcommand{\tw}[2]{\tensor[^#1]{#2}{}}
\newcommand{\sym}{\rm sym}
\begin{document}
\title{Orbifold diagrams}


\author{Karin Baur}
  \address{School of Mathematics, University of Leeds, Leeds, LS2 9JT \\
  Currently on leave from the University of Graz, Graz, Austria.}
  \email{ka.baur@me.com}

\author{Andrea Pasquali} 
  \email{andrea.pasquali91@gmail.com}

\author{Diego Velasco}
  \email{vediegoez@gmail.com}

\maketitle

\begin{abstract}
	We study alternating strand diagrams on the disk with an orbifold point. These are quotients by rotation of Postnikov diagrams on the disk, and we call them orbifold diagrams. We associate a quiver with potential to each orbifold diagram, in such a way that its Jacobian algebra and the one associated to the covering Postnikov diagram are related by a skew-group algebra construction. 
	We moreover realise this Jacobian algebra as the endomorphism algebra of a certain explicit cluster-tilting object. This is similar to (and relies on) a result by Baur-King-Marsh for Postnikov diagrams on the disk.
\end{abstract}


\section{Introduction}

In this article we study \emph{orbifold diagrams}, i.e.~alternating strand diagrams on the disk with an orbifold point. These are collections of oriented arcs satisfying certain properties, which we define as quotients by rotation of alternating strand diagrams on the disk (also called Postnikov diagrams).
The latter have been used in the study of the coordinate ring of the Grassmannian: they give rise to clusters of the Grassmannian cluster algebras, \cite{Scott06}, or to cluster tilting 
objects of the Grassmannian cluster categories \cite{JKS16}, \cite{BKM16}. 
On the other hand, orbifolds have also been related to cluster structures, \cite{PS19}, \cite{CS14}. 
In~\cite{AP18}, Amiot and Plamondon construct
cluster algebras on surfaces with orbifold points of order 2, and in their construction 
skew-group algebras appear naturally.
Here we associate quivers with potentials to orbifold diagrams in such a way that skew-group algebras play a major role.

Skew group construction have been used in representation theory, for example in the seminal work of 
Reiten and Riedtmann~\cite{RR85} and of Asashiba~\cite{As11}. 
In \cite{LF-V}, the authors consider the triangulated disk with one orbifold point of order three. However in their set-up, 
the authors do not need skew group constructions because the action considered is free. The authors obtain a 
generalised cluster algebra from the Jacobian algebra associated to each triangulation of the aforementioned 
triangulated orbifold. Let us point out there is a well-known relation between triangulated surfaces and certain 
Postnikov diagram, see \cite[Section 13]{BKM16}, first described for the disk by Scott in her 
work~\cite[Section 3]{Scott06}. This relation allow us to expect a generalised 
cluster structure from the constructions we give in this paper. We will investigate this in future work. 

Our set-up is the following. 
We start with Postnikov diagrams with rotational invariance, i.e.~with an action of a cyclic 
group $G$ of order $d$, and take the quotient with respect to this action. 
We also give an intrinsic definition of 
such a quotient as a new combinatorial datum associated to a disk with an orbifold point, and call this 
an orbifold diagram.
We associate a quiver with potential $(Q_\OO,W_\OO)$ to every orbifold diagram $\mathcal O$, with a construction that 
depends on whether the orbifold point corresponds to a vertex of the quiver or not. In particular, we give a construction in case the action is not free on vertices.
In Proposition~\ref{prop:sgas} we prove that the frozen Jacobian algebras $A_{\mathcal O}$ 
of this new quiver and the one of the associated Postnikov diagram are related by a skew-group construction. 

We then restrict to the case where the permutation induced by the strands of the associated Postnikov diagram 
on the cover is of Grassmannian type $(k,n)$, to use results from~\cite{JKS16,BKM16}. 
As for Postnikov diagrams, there is an idempotent subalgebra $B({\mathcal O})$ 
of the frozen Jacobian algebra $A({\mathcal O})$ that only depends on $(k,n,d)$. 

Our aim is to realise the frozen Jacobian algebra as an endomorphism algebra of a cluster tilting object as in  
the statement in~\cite[Theorem 10.3]{BKM16}. 
To do this, we construct modules over the idempotent algebra $B(\OO)$ 
of an orbifold diagram 
in such a way that they are the images of the rank 1 modules from~\cite{JKS16} under a canonical functor.  

Any orbifold diagram determines a collection of such modules whose direct sum is a cluster-tilting objects in 
a Frobenius, stably 2-Calabi-Yau category. Our main result, Theorem~\ref{thm:main}, 
is that the endomorphism ring of this 
cluster-tilting object is isomorphic to $A(\OO)$.

\subsection*{Conventions}
We always consider finitely generated left modules and we compose arrows from right to left. 
The base field is the complex numbers. 

\subsection*{Acknowledgements}

Work on this paper started when the second and the third author visited the first author in Graz in 2018. We thank for the support provided by the Department of Mathematics of the University of Graz. All authors thank Ana Garcia Elsener and Matthew Pressland for helpful discussions. K.~B.~was supported by a Royal Society Wolfson Fellowship 180004 and by FWF grants P 30549 and W1230 and by the EPSRC Programme Grant W007509. She is currently on leave from the University of Graz. A.~P.~was supported by Uppsala University and the Alexander von Humboldt Foundation. D.~V.~was supported by the grant CONACyT-238754.

The authors thank the referees for their careful work and for their useful suggestions


\section{Orbifold diagrams}

In this section, we define orbifold diagrams on the disk with an orbifold point. Informally, these 
are quotients of rotation-invariant Postnikov diagrams, also called alternating strand diagrams. 
We start by defining these, following~\cite{Postnikov}.

We write $S_n$ for the symmetric group of permutations of $n$ elements. 

\begin{defin}\label{def:P-diagram}
	A \emph{Postnikov diagram of type $\sigma\in S_n$} is a collection of $n$ oriented curves $\gamma_i$, called \emph{strands}, on a disk with $n$ marked points on the boundary (clockwise labeled $1, \dots, n$), such that
	\begin{enumerate}
		\item The strand $\gamma_i$ connects the boundary point $i$ with $\sigma(i)$, starting at $i$. 
		The strand $\gamma_i$ intersects the boundary 
		only in those two (possibly coinciding) points. 
		\item There are a finite number of crossings, all between two strands, all transverse.
		\item Following a strand, the strands crossing it come alternatingly from the left and from the right. This includes strands crossing at boundary points. 
		\item[(4)] If two strands cross in two points $A$ and $B$, then one is oriented from $A$ to $B$ and the other is oriented from $B$ to $A$. This also applies to crossings at boundary points.
		\item[(5)] If a strand crosses itself other than at a boundary point, then consider the disk determined by the loop. No strand intersects the interior of this disk.
	\end{enumerate}
	A \emph{Grassmannian Postnikov diagram of type $(k,n)$} is a Postnikov diagram satisfying the additional condition
	\begin{enumerate}
		\item[(6)] The permutation $\sigma\in S_n$ is given by $\sigma(i) = i + k\pmod {n}$.
	\end{enumerate}
	Postnikov diagrams are considered modulo isotopy fixing the boundary.
\end{defin}

\begin{rmk}\label{rem:reduce}
	Postnikov diagrams can be reduced as follows, see Figure~\ref{fig:PullingStrand}: \\
	(i) If two strands cross in points $A$ and $B$ such that the region formed by $A$ and $B$ is simply 
	connected then we can 
	reduce by ``pulling the strands'' in a way to remove the two crossings. 
	Note that one of the points $A$ and $B$ may be a marked point on the boundary; in that case, only one crossing gets removed. 
	
	\noindent 
	(ii) If a strand crosses itself and if the disk determined by the loop contains no other strands, the strand 
	can be straightened, i.e. the crossing removed. 
	
	Diagrams reduced in this way retain many properties of the original diagram, and so we will often assume in the following that Postnikov diagrams are reduced.
\end{rmk}

\begin{figure} 
	\includegraphics[scale=.4]{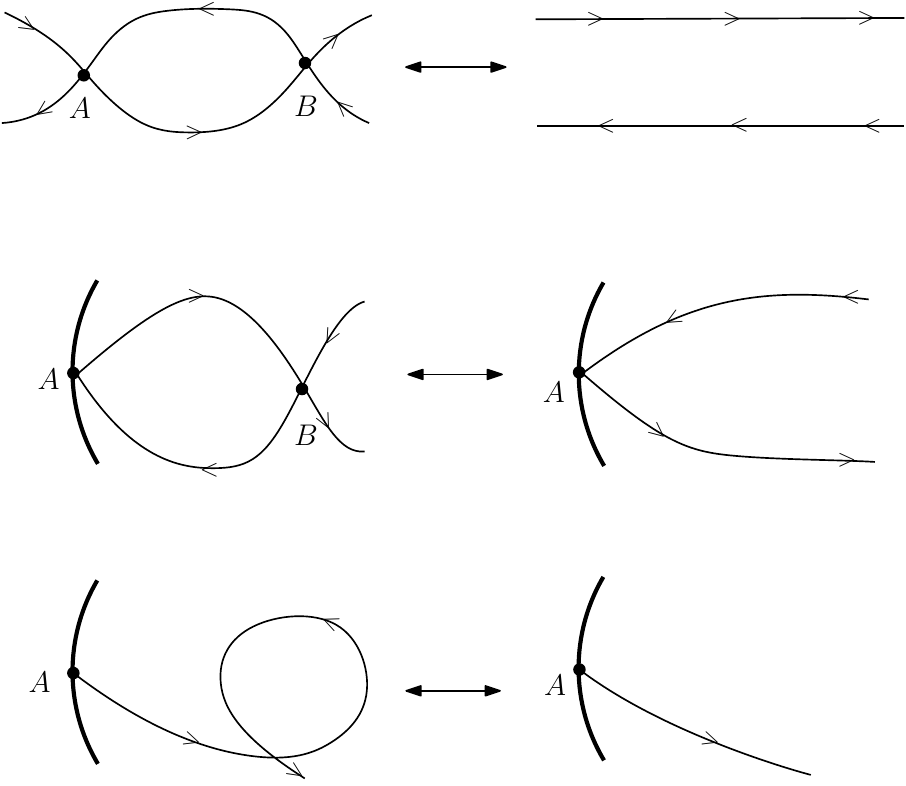}
	\caption{Pulling strands in order to reduce a Postnikov diagram.}
	\label{fig:PullingStrand}
\end{figure}

Since we plan to take quotients by rotations of the disk, an important role is played by the 
Postnikov diagrams which are rotation-invariant. These were first studied in~\cite{Pasquali20} in relation to self-injective Jacobian algebras.

\begin{defin}
	A Postnikov diagram of type $\sigma \in S_n$ is \emph{$d$-symmetric} if it is (up to isotopy) invariant under rotation by $\frac{2\pi}{d}$.
\end{defin}

Observe that in this case $\Z_d$ must act freely on $\{1, \dots, n\}$, and so $d\mid n$.

\begin{ex}\label{ex:P-examples}
	Figure~\ref{fig:P-examples} shows examples of Postnikov diagrams. The first is of type $\sigma= (13764)(258)\in S_8$, the second is a symmetric 
	Postnikov diagram of type $\sigma=(194276)(258) \in S_9$ and the last is a symmetric 
	Grassmannian Postnikov diagram of type $(4,10)$. 
	\begin{figure}
		\includegraphics[scale=.48]{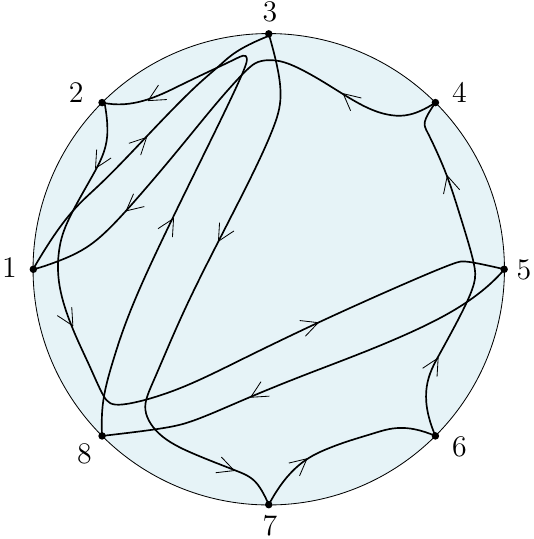}
		\hskip .5cm
		\includegraphics[scale=.58]{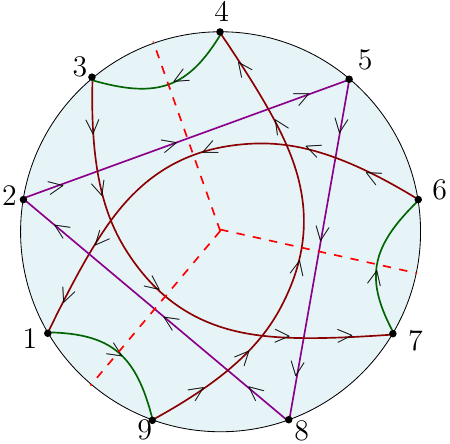}
		\hskip .5cm
		\includegraphics[scale=.38]{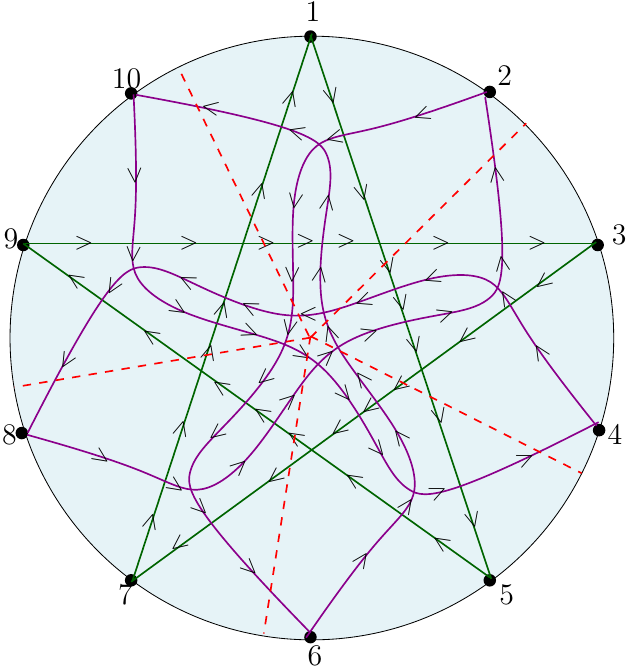}
		\caption{Examples of Postnikov diagrams. Symmetry axes indicated by dashed lines.}
		\label{fig:P-examples}
	\end{figure}
\end{ex}

If we start with a $d$-symmetric Postnikov diagram of order $d>1$, 
we can construct its (topological) quotient by the cyclic group of order $d$ acting by rotations. This will be a collection of curves on a disk with an orbifold 
point of order $d$. The resulting diagram is what we will call an ``orbifold diagram''. 

We first give an abstract definition of a (weak) orbifold diagram and introduce orbifold diagrams in 
Definition~\ref{def:DefOrbiDiagram}. 
We will then show that orbifold diagrams as defined through this are the same 
as quotients of symmetric Postnikov diagrams 
(Proposition~\ref{prop:orbifold-is-quotient}). 

\begin{notation}
	We will use the usual notion of winding number for a closed curve with respect to a point, but the clockwise direction is for us positive. This is because in the literature the boundary points are usually labeled clockwise.
\end{notation}

\medskip 
Let $\Sigma$ be a disk with $n_0$ marked points on the boundary (clockwise labeled $1,\dots, n_0$) 
and an orbifold point $\Omega$ of order $d>1$. 

\begin{defin}\label{def:weak-orbifold}
	A \emph{weak orbifold diagram of type $\tau\in S_{n_0}$ on $\Sigma$} 
	is a collection of $n_0$ 
	oriented curves $\gamma_{ {i}}$, called {\em strands}, on $\Sigma$, such that
	
	\begin{enumerate}
		\item The strand $\gamma_{ {i}}$ connects the boundary point $ {i}$ with $\tau( {i})$, starting from $i$.
		The strand $\gamma_{ {i}}$ intersects the boundary 
		only in those two (possibly coinciding) points, and does not go through $\Omega$. 
		\item There is a finite number of crossings, all between two strands, all transverse.
		\item Following a strand, the strands crossing it come alternatingly from the left and from the right. This includes strands crossing at boundary points. 
		\item\label{DefWO4} If two strands cross in two points $A$ and $B$ and both are oriented from $A$ to $B$, then consider the closed curved formed by following a strand from $A$ to $B$ and then following the other strand in the opposite direction from $B$ to $A$. The winding number of this closed curve with respect to $\Omega$ is not 0 (for an example, 
		see the curves between the points $Q_1$ and $Q_2$ in both pictures in 
		Figure~\ref{fig:example1New}). 
		\item If a strand crosses itself, then consider the closed curve formed by following the strand from a point of intersection to itself. 
		Either this has nonzero winding number with respect to $\Omega$, or it is a simple loop not intersecting any other strand (and thus can be reduced as for Postnikov diagrams).
	\end{enumerate}
\end{defin}
Weak orbifold diagram are considered up to isotopy fixing the boundary and the center of the disk.
Weak orbifold diagrams can be reduced like Postnikov diagrams, provided that strands do not need to be moved 
across the orbifold point when doing so.

\begin{figure}
	\includegraphics[scale=.38]{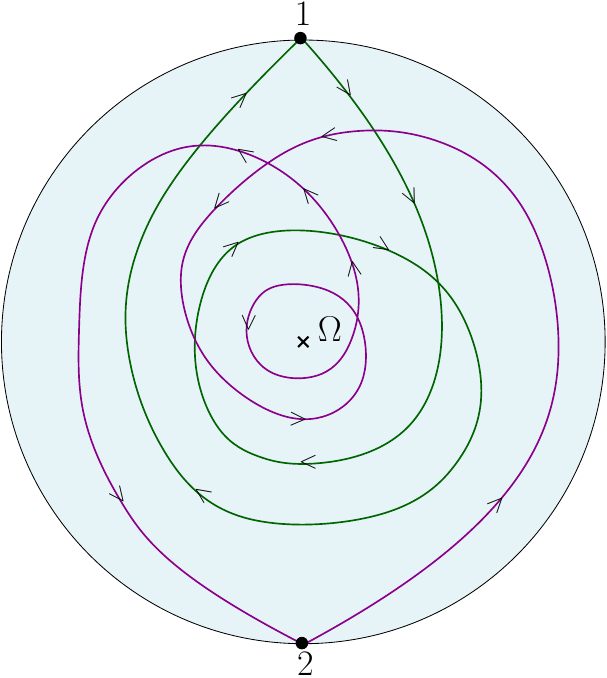}
	\hskip 1cm
	\includegraphics[scale=.6]{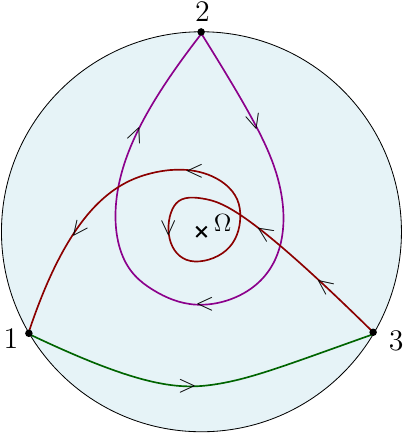}
	\caption{Two weak orbifold diagrams; $\tau=\on{id}\in S_2$ on the left, $\tau=(13)\in S_3$ on the right.}
	\label{fig:weak-orbifold}
\end{figure}

Figure~\ref{fig:weak-orbifold} shows examples of weak orbifold diagrams. 
Observe that the order $d$ of the orbifold point $\Omega$ 
does not appear in the axioms: it is part of the datum of the surface. 
So we can have the same diagram (picture) for varying orders $d$.

\begin{rmk}
	To any weak orbifold diagram $\OO$ on a disk $\Sigma$ we will consider a symmetrized 
	version of $\OO$ on the universal cover of $\Sigma$. This depends on the order of $\Omega$, 
	in particular, the same strand configuration (picture) leads to a symmetrized version for every $d>1$. 
\end{rmk}

\begin{defin}
	Let $\OO$ be a (reduced) weak orbifold diagram on $\Sigma$, assume that $\Omega$ has order $d$. 
	Draw a simple curve joining $\Omega$ to the boundary arc between $n_0$ and 1. 
	Then 
	${\sym}_d(\OO)$ is the collection of $n_0d$ strands obtained from taking $d$ copies of $\OO$ and 
	gluing them along the copies of the simple curve. We draw the resulting surface as a disk and 
	label the marked points by 
	$1,2,\dots, n_0,n_0+1,\dots, dn_0$ clockwise around the boundary. 
\end{defin}

By construction, the image ${\sym}_d(\OO)$ is a collection of $n= n_0d$ strands on a disk 
(without orbifold points)
which is symmetric under rotation by $\frac{2\pi }{d}$. The image 
${\sym}_d(\OO)$ corresponds to taking the universal cover of the orbifold diagram $\OO$ for the 
surface $\Sigma$ with $\Omega$ a point of order $d$. 

The result is not a Postnikov diagram in general, as it may 
have ``lenses" (pairs of twice-crossing parallel strands) and self-crossings (compare 
Definition~\ref{def:weak-orbifold} and Defintion~\ref{def:P-diagram}). 
This is illustrated in Example~\ref{Exam2} 
below. If $d$ is large enough, the symmetrized version ${\sym}_d(\OO)$ of $\OO$ 
is a Postnikov diagram, which is $d$-symmetric by 
construction, see Proposition~\ref{prop:orbifold-is-quotient}. 

Note that the 
the quotient of ${\sym}_d(\OO)$ under the rotation by $\frac{2\pi }{d}$ is $\OO$. 
We will write $\mathcal P/d$ to denote the quotient of a $d$-symmetric Postnikov diagram under the 
rotation by $\frac{2\pi }{d}$. 

\begin{figure}[H]
\centering
	\includegraphics[scale=.6]{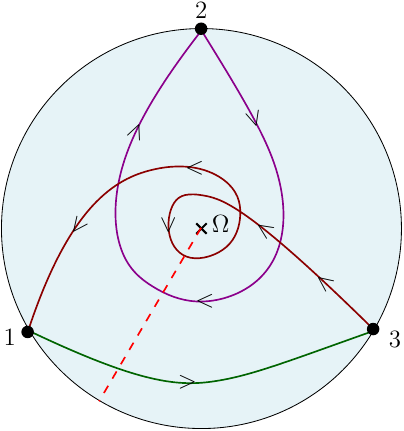}
	\hskip .5cm
	\includegraphics[scale=.6]{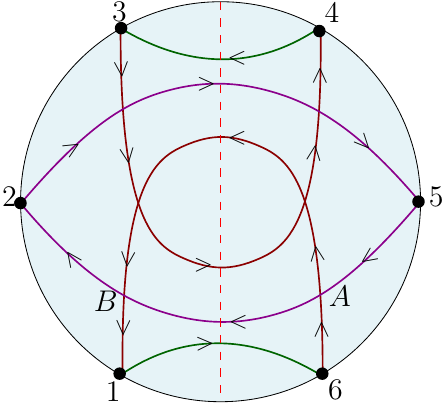}
	\hskip .5cm
	\includegraphics[scale=.6]{figs/ex-n0-3-unfold-dv-O-dv.pdf}
	\caption{A weak orbifold diagram on $\Sigma$, its 2-fold cover in the middle and its 3-fold cover on the right. 
		Red dashed lines indicate the fundamental domains/symmetry axes.}
	\label{fig:example2}
\end{figure}

\begin{ex}\label{Exam2}
	Here we start with a weak orbifold diagram $\OO$ for $\tau=(13)\in S_3$ on $\Sigma$, 
	with orbifold point $\Omega$ of order $d$, see first picture in Figure~\ref{fig:example2}. 
	We consider ${\sym}_d(\OO)$ for $d=2$ and $d=3$. 
	
	Let us consider the 2-fold cover in Figure~\ref{fig:example2}. This is not a good cover of $\OO$ for two independent reasons. First, it is not a Postnikov diagram, since it violates condition (4) of 
	Definition~\ref{def:P-diagram}: the strands crossing at $A$ and $B$ are both oriented from $A$ 
	to $B$. 
	This is because the order of the orbifold point (i.e.~2) is too small compared to how much the strands wind around it. In Definition~\ref{def:DefOrbiDiagram} we will precisely quantify how large $d$ needs to be for the $d$-fold cover to be a Postnikov diagram.
	
	The second issue is more subtle: the diagram of the 2-fold cover is not reduced, in the sense that we can apply a reduction move as in Remark~\ref{rem:reduce}. However, the quotient of the reduced diagram by the rotation of order 2 is not the same as $\OO$ (it corresponds to applying a forbidden reduction move that goes through $\Omega$). 
	This issue arises because the order of the orbifold point is exactly 2. Indeed, the reduction moves are applied to digons, and those arise precisely from covers of order 2. To avoid this, we will stipulate that the order of orbifold diagrams is at least 3, which ensures that if $\OO$ is reduced then its cover is also reduced.
	
	Finally, the 3-fold cover ${\sym}_3(\OO)$ is a 3-symmetric Postnikov diagram: the problems disappear, since 3 is large enough (as per Definition~\ref{def:DefOrbiDiagram}) and is not equal to 2.
\end{ex} 

Let us point out that if we start from a $d$-symmetric Postnikov diagram on a disk with 
$n=n_0d$ marked points and take its quotient under the rotation by $\frac{2\pi}{d}$, we obtain 
a weak orbifold diagram on a disk $\Sigma$ with $n_0$ points with additional properties, see Example~\ref{Exam1}.

\begin{ex}\label{Exam1}
	We start with a 5-symmetric Grassmannian Postnikov diagram of type $(4,10)$, 
	see Figure \ref{fig:example1}. 
	When we quotient by the 5-fold symmetry we get a weak
	orbifold diagram on a disk $\Sigma$ with a point $\Omega$ of order 5 and 
	with $2$ marked points. The type of the image is $\tau=\on{id}$. 
	\begin{figure}[H]
		\includegraphics[scale=.55]{figs/example-4-10-dv-O-dv.pdf}
		\hskip 1cm
		\includegraphics[scale=.55]{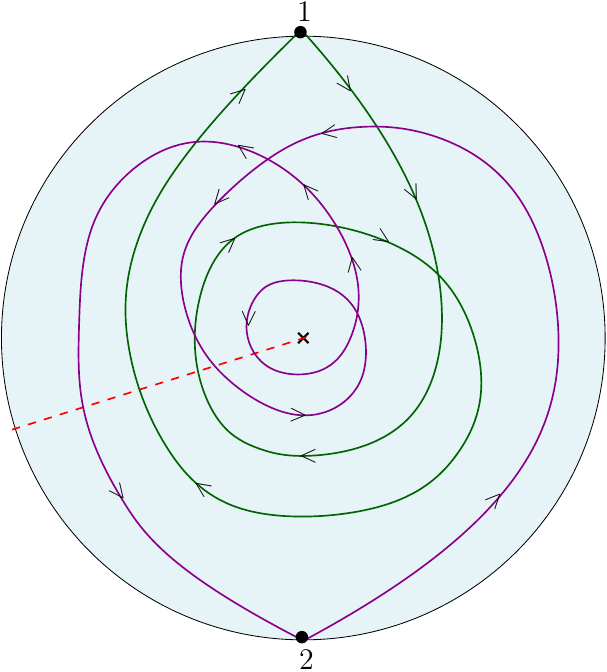}
		\caption{Taking the quotient under rotation by $2\pi/5$ gives a weak orbifold with a 
		point of order 5; 
			red dashed lines indicate symmetry axes/fundamental domains.}
		\label{fig:example1}
	\end{figure}
\end{ex}

\begin{ex}\label{ex:middle-alternating}
Here we have a 3-symmetric Postnikov diagram of type $(3,9)$. 
Its quotient by the 3-fold symmetry, on the right, is a weak orbifold diagram on a disk $\Sigma$ with 
$\Omega$ of order 3 and 3 marked points, of type $\tau=\on{id}$. 
\begin{figure}[H]
	\includegraphics[scale=.51]{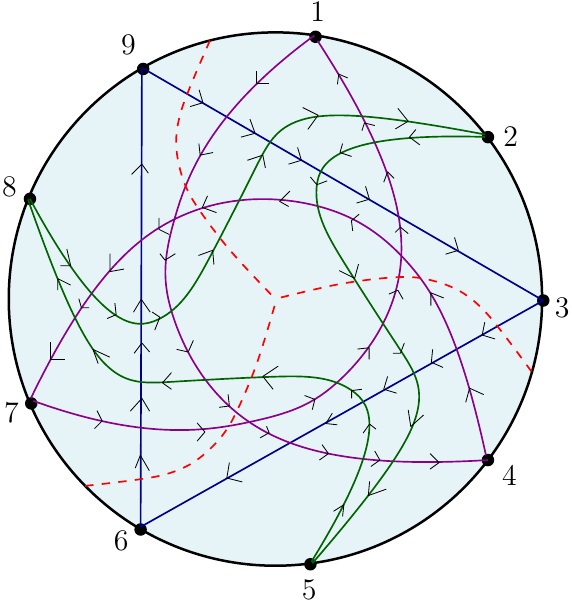}
	\hskip .3cm
	\includegraphics[scale=.54]{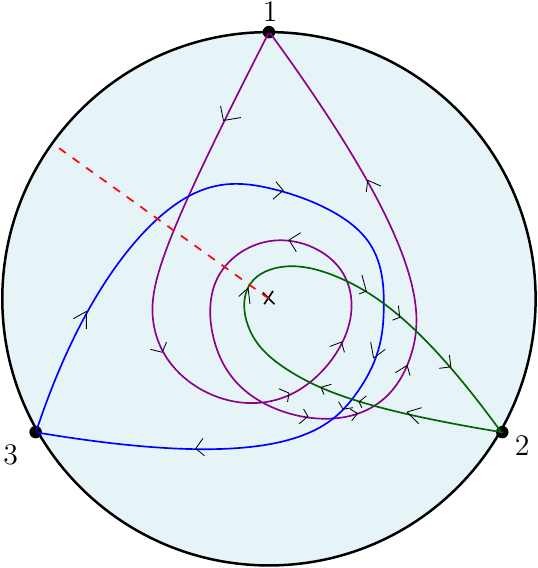}  
	\caption{A symmetric orbifold diagram $\mathcal P$ with its quotient $\mathcal P/3$ on the right.} 
	\label{fig:middle-alternating}
\end{figure}	

\end{ex}

We would like to upgrade our definition of weak orbifold diagram by including the value of $d$ in the datum of the picture, as well as guaranteeing that the $d$-fold cover is a Postnikov diagram. The only properties that might fail are $(4)$ and $(5)$ in Definition~\ref{def:P-diagram}. Since for sufficiently large $d$ these properties hold, 
we pick the smallest such $d$.

Let us define some notation. For a strand $\gamma$ in a weak orbifold diagram, consider its points of self-intersection (including at the boundary). Each of these points $P$ determines a closed subcurve of $\gamma$ (going from $P$ to itself), which has a winding number $w(P)$ with respect to $\Omega$. 
We define $S(\gamma)$ to be the maximum of the absolute values of $w(P)$, where $P$ varies in the set of self-intersections of $\gamma$. If $\gamma$ does not intersect itself we set $S(\gamma) = 0$.

Similarly, let $\gamma_1$ and $\gamma_2$ be two strands in an orbifold diagram. Assume that they meet in two points 
$A$ and $B$, and that they are both oriented from $A$ to $B$. 
Then consider the curve formed by following $\gamma_1$ from $A$ to $B$ and then $\gamma_2$ against the orientation from $B$ to $A$. 
This is a closed loop and it has a winding number $w(A,B)$ with respect to $\Omega$. 
Strictly speaking, this is not well-defined as the sign of $w(A,B)$ depends on the choice 
of the curve that is taken against the orientation. But we are only interested in the absolute value of 
$w(A,B)$: 
We define $L(\gamma_1, \gamma_2)$ to be the maximum of the absolute values of $w(A, B)$ for all pairs $A,B$ as above. 
We set $L(\gamma_1, \gamma_2) = 0$ if $\gamma_1$ and $\gamma_2$ do not meet as above. 

\begin{defin}\label{def:DefOrbiDiagram} 
	Let $\Sigma$ be a disk with an orbifold point $\Omega$ of order $d>1$. 
	A weak orbifold diagram $\mathcal O$ on $\Sigma$ 
	is an \emph{orbifold diagram (of order $d$)} if 
	\[
	d> \max\{\max_{\gamma}S(\gamma), \max_{\gamma_1\neq \gamma_2}L(\gamma_1,\gamma_2)\}.
	\]
	An orbifold diagram on $\Sigma$ is \emph{Grassmannian} if  $\tau=\on{id} $
	and there is an integer $0<w_+<d$ such that every strand has winding number $w_+$ or $w_+-d$.
	 In this case, we say that the orbifold diagram is \emph{of type} $(k,n)$, where $n= n_0d$ and $k = n_0w_+$.
\end{defin}

\begin{ex}\label{ex:ex2-ex1-orb}
We consider the weak orbifold diagrams from Examples \ref{Exam2} and \ref{Exam1}.
	
\begin{enumerate}
\item We first take the weak orbifold diagram $\OO$ on the left of 
Figure \ref{fig:example1New}. We want to see whether the conditions of 
Definition \ref{def:DefOrbiDiagram} hold. The strand $\gamma_1$ does not have self-intersection points, 
$w(P_2)=1$ and $w(P_3)=-1$, so $S(\gamma_1)=0$, $S(\gamma_2)=S(\gamma_3)=1$. For the 
second condition: we have $w(Q_1,Q_2)=2$ and so $L(\gamma_2, \gamma_3)=2$. 
Since $d=3$, $\OO$ is indeed an orbifold diagram.

\item 
Now we look at the diagram $\OO$ on the right of Figure \ref{fig:example1New}. 
This is a weak orbifold diagram of order $d=5$ and we want to see whether the conditions of 
Definition \ref{def:DefOrbiDiagram} hold. 
We get $w(P_{11})=2$, $w(P_{12})=1$ $w(P_{21})=-3$, $w(P_{22})=-2$ and $w(P_{23})=-1$. 
So $S(\gamma_1)=2$ and $S(\gamma_2)=3$. We have  $w(Q_1,Q_2)=1$, $w(Q_1, Q_3)=3$, $w(Q_1,Q_4)=4$, $w(Q_2,Q_3)=2$, $w(Q_2,Q_4)=3$ and $w(Q_3, Q_4)=1$. 
In this case $L(\gamma_1, \gamma_2)=4$. Since $d=5$, $\OO$ is also an orbifold diagram.

 \begin{figure}[H]
		\includegraphics[scale=.75]{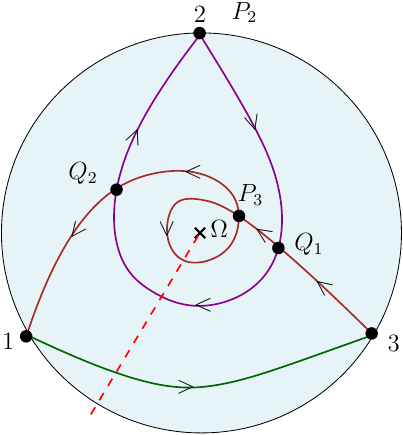}
		\hspace{1cm}
		\includegraphics[scale=.48]{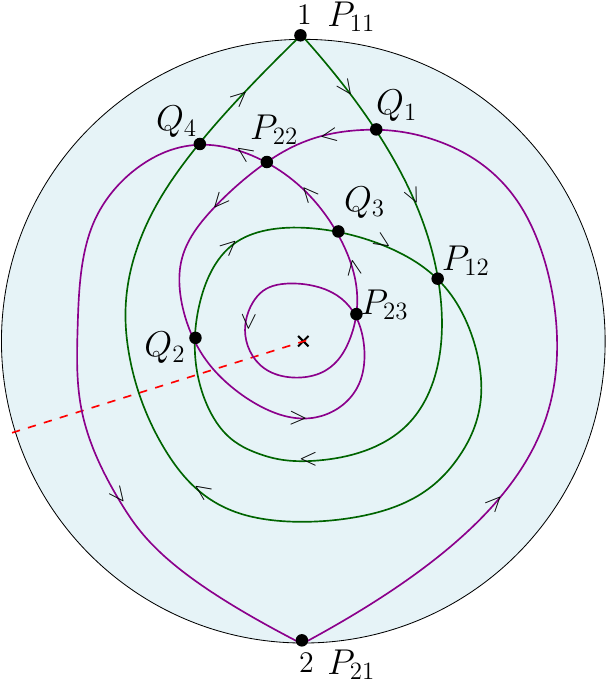}
		\caption{Computing the values of the winding numbers, 
		see Definition~\ref{def:DefOrbiDiagram}. }
		\label{fig:example1New}
	\end{figure} 	
\end{enumerate} 	
\end{ex}

\begin{prop}\label{prop:orbifold-is-quotient}
	Let $\OO$ be an orbifold diagram of order $d>2$, and let $\mathcal P$ be an 
	$s$-symmetric Postnikov diagram (for some $s>1$). 
	Then:
	\begin{enumerate}
		\item ${\sym}_d(\OO) $ is a $d$-symmetric Postnikov diagram.
		\item $\mathcal P/s$ is an orbifold diagram on a disk with an orbifold point 
		$\Omega$ of order $s$.
		\item ${\sym}_d(\OO)/d = \OO$ and, if $s>2$, ${\sym}_s(\mathcal P/s) = \mathcal P$.
	\end{enumerate}
\end{prop}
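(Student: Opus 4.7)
The key tool throughout is a \emph{lifting principle}: the quotient map $\pi\colon \sym_d(\OO)\setminus\{\text{center}\}\to \OO\setminus\{\Omega\}$ is a regular $d$-sheeted covering whose deck group $\langle\rho\rangle$ is generated by rotation $\rho$ of angle $2\pi/d$. For any closed loop $\ell$ in $\OO\setminus\{\Omega\}$ with winding number $w$ around $\Omega$, the monodromy along $\ell$ is $\rho^w$, so a lift of $\ell$ beginning at $p$ terminates at $\rho^w p$; in particular, the lift is itself closed if and only if $w\equiv 0\pmod d$. All three parts reduce to pushing this principle through the axioms.

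For (1), the $d$-symmetry of $\sym_d(\OO)$ is immediate from the construction. Axioms (1)--(3) of Definition~\ref{def:P-diagram} are local statements and transfer from the corresponding weak orbifold axioms via the local homeomorphism $\pi$. For axiom (4), suppose two distinct strands $\tilde\gamma_1,\tilde\gamma_2$ of the cover crossed at points $A,B$ both oriented $A\to B$, and let $L$ be the closed loop obtained by following $\tilde\gamma_1$ from $A$ to $B$ and $\tilde\gamma_2$ from $B$ back to $A$. Then $\pi(L)$ is closed, so by the lifting principle its winding $w$ around $\Omega$ satisfies $w\equiv 0\pmod d$. If $\pi(\tilde\gamma_1)\neq \pi(\tilde\gamma_2)$, weak orbifold axiom (4) forces $0<|w|\leq L(\pi(\tilde\gamma_1),\pi(\tilde\gamma_2))<d$, a contradiction. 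Otherwise $\tilde\gamma_2=\rho^j\tilde\gamma_1$ for some $0<|j|<d$, and the two cover crossings project to self-intersections of $\gamma:=\pi(\tilde\gamma_1)$ whose sub-loop windings $w_A,w_B$ satisfy $w_A\equiv w_B\equiv\pm j\pmod d$; combined with $|w_A|,|w_B|\leq S(\gamma)<d$ and the alternating axiom (3), one rules out such parallel double-crossings of $\tilde\gamma_1$ and $\rho^j\tilde\gamma_1$. Axiom (5) is handled analogously using $S(\gamma)<d$.

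For (2), the weak orbifold axioms (1)--(3) for $\mathcal P/s$ follow from the Postnikov axioms by projecting via $\pi$. For axiom (4), a parallel digon in $\mathcal P/s$ whose bounding loop had winding $0$ would, by the lifting principle, lift to a parallel digon in $\mathcal P$, violating Postnikov (4); axiom (5) is similar. For the order bound $s>\max\{S,L\}$: any self-intersection of a strand $\gamma$ in $\mathcal P/s$ with sub-loop winding $w$ corresponds in $\mathcal P$ to a crossing between the distinct lifts $\tilde\gamma$ and $\rho^w\tilde\gamma$, and since there are only $s$ distinct rotates of $\tilde\gamma$, a careful winding count (using reducedness of $\mathcal P$ and the finiteness of strand crossings) forces $|w|<s$; an analogous argument bounds $L$.

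For (3), both identities reduce to verifying that $\sym$ and the quotient are inverse at the combinatorial level: $\sym_d(\OO)$ is by definition $d$ copies of $\OO$ glued along a fundamental curve, so quotienting by $\rho$ returns $\OO$; conversely, $\mathcal P/s$ retains all information needed to reassemble $\mathcal P$ by symmetrization. The hypothesis $s>2$ is precisely what is needed to avoid the digon-through-$\Omega$ pathology of Example~\ref{Exam2}, where for $s=2$ a reduction move at the center of the cover fails to descend to a legal reduction move at $\Omega$. The main obstacle will be the bookkeeping in Case B of part (1), namely ruling out parallel crossings between rotated lifts of a single quotient strand, together with the winding bound in part (2); both are resolved by systematically lifting the local obstruction to the cover and invoking the full Postnikov axioms, with the orbifold bound $d>\max\{S,L\}$ providing the needed numerical room.
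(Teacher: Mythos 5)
Your proof takes essentially the same route as the paper's: the lifting/monodromy principle for the $d$-fold branched cover, together with the orbifold bound $d>\max\{S,L\}$, shows that the loops in $\OO$ coming from potential violations of Postnikov (4)--(5) cannot lift to closed loops in ${\sym}_d(\OO)$, and conversely for part (2). Your explicit split in (1) into Case A/Case B (distinct versus coincident projections $\pi(\tilde\gamma_1)$, $\pi(\tilde\gamma_2)$) is a worthwhile refinement of the paper's terse ``an analogous argument,'' though your resolution of Case B and of the winding bound $|w|<s$ in (2) is left at roughly the same sketch level as the paper's own treatment of those points.
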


\begin{proof}
	Let us begin with (1). First, at every boundary point of ${\sym}_d(\OO)$ there is exactly one outgoing and one incoming strand, as this is true for $\OO$. Moreover, since every point on a strand can be reached by walking along a strand on $\OO$, the same is true for ${\sym}_d(\OO)$, which means that every piece of curve in ${\sym}_d(\OO)$ is indeed part of a strand coming from the boundary (i.e.~there are no closed strands inside the interior of the disk). So condition (1) of Definition~\ref{def:P-diagram} is satisfied. 
	Condition (2) holds by construction, as does condition (3) since it is local.
	Let us examine condition (5). Every self-crossing of a strand $\gamma$ on ${\sym}_d(\OO)$ comes from a self-crossing of a strand $\eta$ of $\OO$. Let $\eta'$ be the subcurve of $\eta$ 
	defined by such a crossing, i.e.~we start from a crossing point $Q$ and follow $\eta$ until 
	we reach $Q$ again. 
	Thus $\eta'$ is a closed curve in $\OO$. 
	By condition (5) of Definition~\ref{def:weak-orbifold}, this either can be reduced or has nonzero winding number. If it can be reduced, so can its images in ${\sym}_d(\OO)$ and condition (5) is satisfied. 
So assume that $\eta'$ cannot be reduced. Then by Definition~\ref{def:DefOrbiDiagram}, 
the absolute value of the winding number of $\eta'$ is strictly less than $d$. 
Without loss of generality, this winding number $w$ is positive, so $1<w<d$. 

Call $c$ the curve connecting $\Omega$ to the boundary of the disk in $\OO$ which we chose to construct ${\sym}_d(\OO)$. 
We may choose $c$ in a way to minimise the crossings with $\eta'$. 
Then $\eta'$ crosses $c$ exactly $w$ times. 

So if we follow the image of $\eta'$ in ${\sym}_d(\OO)$ starting from a chosen lift of the crossing 
point $Q$, it will reach another lift of $Q$ in the $w$-th copy of the fundamental domain, counting 
clockwise from the copy where $Q$ is. 
Since $w<d$, these two lifts are in different regions
so the lifts of the starting segment and of the ending segment of $\eta'$ 
belong to different strands in ${\sym}_d(\OO)$. 
In particular, the lifts of $\eta'$ do not violate condition (5) of Definition~\ref{def:P-diagram}. 

	An analogous argument applied to the closed subcurve associated to two strands crossing in 
two points (as in condition (4) of Definition~\ref{def:weak-orbifold}) shows that condition 
(4) must hold as well. 
We conclude that ${\sym}_d(\OO)$ is a Postnikov diagram, which is also invariant under rotation by 
$\frac{2\pi}{d}$ by construction.
	
	Now to prove claim (2). First, conditions (1)--(5) in Definition~\ref{def:weak-orbifold} follow 
each from the corresponding condition in Definition~\ref{def:P-diagram}. 
The inequality of Definition~\ref{def:DefOrbiDiagram} follows from the (converse of) the argument 
we used for claim (1): the points in $\mathcal P$ mapping down to a self-intersection in 
$\mathcal P/s$ must be distinct since $\mathcal P$ is a Postnikov diagram, and so 
the order $s$ is large enough. The same holds for two strands crossing in two points, 
and so $\mathcal P/s$ is an orbifold diagram.
	
Claim (3) is clear by definition of the operations ${\sym}_d $ and $/s$.
\end{proof}

%
\section{Labels on orbifold diagrams}\label{sec:labels}

	We will now explain how to associate equivalence classes of subsets of $\{1, \dots, n\}$ to alternating regions of an orbifold diagram, in a way that corresponds to the construction for Postnikov diagrams from~\cite{Postnikov}.
	
	Let $\OO$ be an orbifold diagram of order $d$ and of type $\tau \in S_{n_0}$.
	To it we associate the $d$-symmetric Postnikov diagram ${\sym}_d(\OO)$ as explained before. 
The latter has $n= n_0d$ marked points on the boundary and has type $\sigma$ for some $\sigma$ 
depending on $\tau$ and $\OO$.
	For Postnikov diagrams, the rule in~\cite{Postnikov} can be used to assign a $k$-element subset of 
$\{1, \dots, n\}$ to certain regions delimited by the strands (for a certain $k$ depending on $\sigma$). 
We recall this construction here. 	
	
	First observe that the complement of the strands of ${\sym}_d(\OO)$ in the disk is a disjoint union of 
topological disks, which can each be of one of three kinds. There are \emph{boundary regions}, whose boundary 
contains a segment (of positive length) of the boundary of the disk, 
and there are \emph{cyclical} and \emph{alternating} regions, depending on whether the strands
adjacent to them give their boundary a cyclic orientation or not. The strands adjacent to the boundary regions are alternatingly oriented and so we count these regions as alternating 
regions.
We will assign to each alternating region a label, which is a subset of $\{1, \dots, n\}$. 
We do this as follows: every strand divides the disk into two pieces, one on its left and one on its 
right (when following the strand in its orientation). A number $i$ is part of the label of an alternating region if 
and only if the region is in the left piece determined by the strand starting at vertex $i$. 
This procedure assigns a subset of some constant cardinality to every boundary and 
every alternating region as when we move from one alternating region to a neighbouring alternating region we always 
exchange one label for another one. 
If the Postnikov diagram is Grassmannian of type $(k,n)$, then this cardinality 
is equal to $k$. 
Examples of labels on (symmetric) Postnikov diagrams are on the left hand side 
of Figure~\ref{fig:NewLabelEx-1} and on the left of Figure~\ref{fig:dimer-relations}.
	
	If the Postnikov diagram is $d$-symmetric, then the labels of two regions related by rotation 
by $\frac{2\pi}{d}$ differ by adding $n_0 = n/d$ (addition on sets is meant pointwise). 
We use the labels of ${\sym}_d(\OO)$ to associate labels to the alternating regions of $\OO$ 
by taking equivalence classes of sets of labels under adding $n_0$ pointwise (that is, $\{i_1,\dots, i_k\}\sim_{n_0} \{h_1, \dots, h_k\}$ if there is $j$ such that $ \{i_1+ jn_0,\dots, i_k +jn_0\} = \{h_1, \dots, h_k\}).$ 
We use square brackets to denote the equivalence classes of sets of labels: 
$[i_1,\dots, i_k]_{n_0}$ for the set 
$\{\{i_1+jn_0, i_2+jn_0,\dots, i_k+jn_0\}\mid 1\le j\le d\}$ where all labels $i_l$ are from  
$\{1,2,\dots, n_0d\}$.
Every alternating region of $\OO$ corresponds to $d$ different alternating regions of ${\sym}_d(\OO)$ in general 
(see  Remark~\ref{rmk:central_region}) and as such to an equivalence class $[i_1,\dots, i_k]_{n_0}$ of labels. 
We assign this equivalence class to the alternating region, and do this for all alternating regions of $\OO$.

\begin{defin}
	\label{defin:I_O}
	Let $\OO$ be an orbifold diagram with $n_0$ boundary points. Let $\mathcal I$ be the collection of 
	labels of alternating regions of ${\sym}_d(\OO)$ and let $\sim_{n_0}$ be the equivalence relation on 
	$\mathcal I$ described above. We define $\mathcal I_\OO = \mathcal I/\sim_{n_0}$.
	By the previous discussion, $\mathcal I_\OO$ is the set of labels attached to the alternating regions of $\OO$. 
\end{defin}

\begin{rmk}\label{rmk:central_region}
	The equivalence classes $[i_1,\dots, i_k]_{n_0}$ usually contain $d$ elements, corresponding to the $d$ different regions of ${\sym}_\OO$ mapping down to a given region of $\OO$. A possible exception is the central region of $\OO$, in case it happens to be alternating: 
	its label is a single subset which is invariant under adding $n_0$ to its elements. In this case we have 
	$[i_1,\dots, i_k]_{n_0} = \{\{i_1,\dots, i_k\}\}$.
\end{rmk}

We now give a way to obtain the labels directly from the orbifold diagram, without going 
through the 
associated symmetric Postnikov diagram. We illustrate this algorithm in 
Examples~\ref{ex:labels-d-5}, 
\ref{ex:labels-d-3} and \ref{Ex:CentralFixedPoint}.  

\begin{algo}\label{alg:labels}
Step 1: Let $\mathcal O$ be an orbifold diagram of order $d$ on a disk with $n_0$ marked points. 
Let $n=dn_0$. 
Draw a curve $\gamma$ from the orbifold point 
$\Omega$ to the boundary of the disk which ends between $n_0$ and $1$ 
(see Remark~\ref{rem:orbifold-labeling} (1)) such that $\gamma$ crosses the strands 
$\gamma_i$ transversally and never goes through a crossing of two strands. 

Step 2: 
The curve $\gamma$ divides (some of) the strands into different connected components which we call 
{\em segments}. 
We now label these different segments as follows. The strand $\gamma_i$ gets the label $i$ from its 
starting point to the first intersection with $\gamma$. 
If $\gamma_i$ leaves $i$ clockwise (i.e. when leaving $i$, it appears to follow the boundary in a clockwise way and 
the orbifold point is to the right of $\gamma_i$ when it crosses $\gamma$), we subtract $n_0$ from 
the label, reducing integers modulo $n$. If $\gamma_i$ leaves $i$ counterclockwise, we add $n_0$ to the label, 
reducing modulo $n$. The segment between the first crossing and the second crossing is then 
$i\mp n_0$ accordingly. We iterate this until all segments of each $\gamma_i$ are labeled. 
The labels on the segments of $\gamma_i$ are in $\{1,2,\dots, n\}$ since we reduce modulo $n$. 

Step 3: 
Every strand divides the surface into two regions, one on its left and one of its right (when following 
the strand in its orientation). Furthermore, the complement of all strands is a union of faces, one of them 
containing the orbifold point, where the boundary of each face is formed by parts of the strands and where each 
face is either cyclical or alternating. To every alternating region 
which is not incident with the curve 
$\gamma$, we associate the label $i+mn_0$ if the alternating region is to the left of the strand segment with 
label $i+mn_0$ (for some $m\in \mathbb{Z}$). 

Step 4:
Observe that the alternating regions through which $\gamma$ goes are {\em cut in two} if we open the disc 
$\Sigma$ along $\gamma$. We only associate labels to the region which is counterclockwise from 
$\gamma$ (see Remark~\ref{rem:orbifold-labeling}(2) below) as in Step 3: 
such an alternating region gets label $i+mn_0$ if it 
is to the left of the strand segment with label $i+mn_0$ (for some $m\in \mathbb{Z}$).

Step 5: ``Add missing labels'': After steps 3 and 4, every alternating region has a certain number of labels. This 
number is constant as whenever we go from one alternating region to a neighbouring alternating region, 
we cross exactly two strands, one in each direction, so one label gets added and one removed, keeping 
the number of labels constant. 
However, certain elements of $\{1,2,\dots, n\}$ do not appear as segment labels (Step 2). 
Let $j$ be such a label and let $j_0$ be its reduction 
modulo $n_0$. If the orbifold point $\Omega$ is to the left  of strand $\gamma_{j_0}$, 
we associate the label $j$ to 
every alternating region of $\mathcal O$. 
If not, the label $j$ does not appear in any of the regions. 
\end{algo}

\begin{rmk}\label{rem:orbifold-labeling} 
(1) The curve $\gamma$ breaks $\mathcal O$ open so that it can be viewed to be a copy of the 
fundamental domain of ${\sym}_d(\mathcal O)$, with marked points $i,i+1,\dots, i+n_0-1$ along the boundary 
(for some $i\in \{1,\dots, dn_0\}$). It is important that $\gamma$ links the orbifold point 
$\Omega$ with the boundary segment between $n_0$ and $1$, in order for the algorithm to agree with Definition~\ref{defin:I_O} without further adjustments. \\
(2) Note that the collection of the labels on all the segments of the strands $\gamma_i$ is multiplicity-free. 
In general, it is a proper subset of $\{1,2,\dots, n\}$. \\
(3) Consider an alternating region which is ``cut'' by $\gamma$. Associate labels to the two halves of 
this region (under the cut by $\gamma$) according to steps 4 and 5. Let $\{i_1,\dots, i_r\}$ be the labels of the 
region clockwise from $\gamma$. Then the labels of the other half are 
$\{i_1+n_0,\dots, i_r+n_0\}$. 
\end{rmk}

Comparing the above construction with the definition of labels for orbifold diagrams, we get: 
\begin{lemma}
The set of labels for $\OO$ obtained through Algorithm~\ref{alg:labels} is a system of representatives 
for $I_\OO$. 
\end{lemma}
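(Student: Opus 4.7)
The plan is to reduce the statement to a direct comparison between the algorithm and the Postnikov labeling rule on ${\sym}_d(\OO)$. Fix a fundamental domain $F$ of ${\sym}_d(\OO)$ bounded by two consecutive lifts $\gamma^{(0)}$ and $\gamma^{(1)}$ of $\gamma$, chosen so that the marked points along its boundary are $1,\dots,n_0$. Every alternating region $R$ of $\OO$ that is not cut by $\gamma$ has a unique lift $R^{(0)}\subset F$; a region cut by $\gamma$ has its counterclockwise half lifting into $F$; and the central region, if alternating, lifts to itself. I will show that the subset of $\{1,\dots,n\}$ produced by Algorithm~\ref{alg:labels} for $R$ is exactly the Postnikov label of $R^{(0)}$ in ${\sym}_d(\OO)$, which by Definition~\ref{defin:I_O} is a representative of the class in $\mathcal I_\OO$ associated to $R$.

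For the segment labeling of Step 2, each crossing of $\gamma_i$ with $\gamma$ in $\OO$ corresponds in the cover to the lifted strand crossing one of the boundary lifts $\gamma^{(k)}$, so the segment of $\gamma_i$ between two consecutive crossings of $\gamma$ lifts to a segment inside $F$ of a different lifted strand, namely the one starting at $i\pm n_0 \pmod n$. The sign is determined by which side of the transition (clockwise or counterclockwise around $\Omega$) the strand goes, and a direct inspection in a single fundamental domain shows this matches the $\mp n_0$ convention of Step 2. Iterating, the segment of $\gamma_i$ with algorithmic label $j$ in $\OO$ corresponds to the intersection with $F$ of the lifted strand starting at vertex $j$.

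Given this identification, Steps 3 and 4 are simply the translation of the Postnikov rule (``$R^{(0)}$ gets label $j$ iff it lies to the left of the lifted strand starting at $j$'') back to $\OO$. For an uncut region this is immediate; for a region cut by $\gamma$, the algorithm picks the counterclockwise half, which is the half that lifts into $F$ (where $F$ is taken counterclockwise from $\gamma^{(0)}$).

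The subtle point is Step 5. A label $j \in \{1,\dots,n\}$ not appearing as a segment label corresponds to a lifted strand of $\gamma_{j_0}$ that lies entirely outside $F$; in $\OO$ this means $\gamma_{j_0}$ does not meet $\gamma$ at all, so its $d$ lifts sit one per fundamental domain. The lift starting at vertex $j$ lies in a fundamental domain $F' \neq F$, and since it crosses no $\gamma^{(k)}$ it separates the cover into two pieces with $F$ lying entirely on one side. The center of the cover disk is a boundary point of $F$, so $F$ is on the same side of this lifted strand as the center. Hence $R^{(0)}$ is to the left of the lifted strand starting at $j$ iff $\Omega$ is to the left of $\gamma_{j_0}$ in $\OO$, uniformly over every $R$. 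In that case the Postnikov rule attaches $j$ to every alternating region of $F$, recovering Step 5; otherwise $j$ appears in none. The main obstacle in executing this plan is the careful orientation bookkeeping in Step 2, together with verifying the ``same side as the center'' assertion above, which uses precisely that the lifted strand does not wind around $\Omega$ (a consequence of the hypothesis that $\gamma_{j_0}$ does not cross $\gamma$).
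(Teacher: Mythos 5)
The paper gives no proof of this lemma; it merely asserts it as a consequence of "comparing the above construction with the definition of labels for orbifold diagrams". Your plan --- to check that the output of each step of the algorithm on a region $R$ of $\OO$ reproduces the Postnikov label of the lift $R^{(0)}$ of $R$ inside a fixed fundamental domain $F$ of $\sym_d(\OO)$ --- is exactly the right way to unpack that assertion, and your treatment of Steps 2--4 is sound: the segment of $\gamma_i$ cut off by the $m$-th crossing with $\gamma$ is the restriction to $F$ of the lifted strand starting at $i\mp n_0$, etc.

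However, there is a genuine gap in your treatment of Step 5. You claim that if $j$ does not appear as a segment label, then ``in $\OO$ this means $\gamma_{j_0}$ does not meet $\gamma$ at all, so its $d$ lifts sit one per fundamental domain'', and you lean on the resulting fact that $\tilde\gamma_j$ does not wind around $\Omega$ to close the argument. This implication is false. The correct statement is only that the lift $\tilde\gamma_j$ (the one starting at vertex $j$) avoids $F$ --- equivalently, avoids $\gamma^{(0)}$ and $\gamma^{(1)}$ --- but it may still cross the other lifts $\gamma^{(k)}$ and hence span several fundamental domains. The paper's own Example~\ref{ex:labels-d-5} shows the failure: there $d=5$, $n_0=2$, and the missing labels $3$ and $5$ both reduce to $j_0 = 1$, yet $\gamma_1$ does cross $\gamma$ (its lift $\tilde\gamma_1$ runs from vertex $1$ to vertex $5$ across two copies of $\gamma$, giving segment labels $1,9,7$).

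The weaker conclusion you actually need --- that $F$, and hence every $R^{(0)}$ and the center, lies entirely in one complementary component of $\tilde\gamma_j$ --- does follow, since $\tilde\gamma_j$ is disjoint from $\overline{F}$ and the center is a limit point of $F$ not on $\tilde\gamma_j$. What is not justified by your argument is the final identification ``center to the left of $\tilde\gamma_j$ iff $\Omega$ to the left of $\gamma_{j_0}$'' in the case when $\gamma_{j_0}$ does cross $\gamma$. Your stated reason (that the lift does not wind around $\Omega$) is precisely what fails in that case, so the orientation-matching needs a separate argument --- for instance, tracking the side of $\tilde\gamma_j$ along a radial path from the center to the boundary of $D$, and comparing this with the corresponding path in $\OO$ crossing $\gamma_{j_0}$ transversally. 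Until this is supplied, Step 5 of the comparison is incomplete.
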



We illustrate the algorithm on the three running examples to show how we 
associate labels to orbifold diagrams. 

\begin{ex}\label{ex:labels-d-5} 
	In Figure \ref{Labels410}, we apply Algorithm~\ref{alg:labels} 
	to the orbifold diagram of Example~\ref{ex:ex2-ex1-orb} (2). Recall that $d=5$ and $n=10$. 
	The labels to consider in Step 5 are $3,5,10$. Only $10$ satisfies the condition of Step 5 and will 
	get added to all alternating regions. 
	\begin{figure}[H]
		\includegraphics[scale=.45]{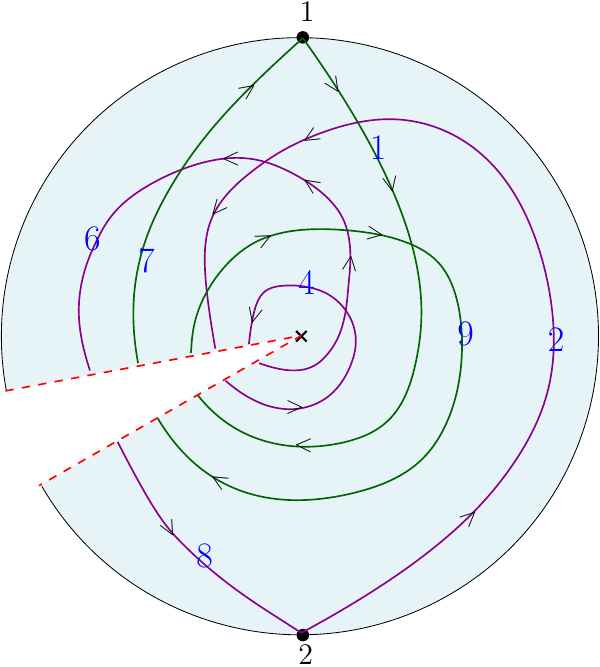}
		\hspace{.5cm}
		\includegraphics[scale=.45]{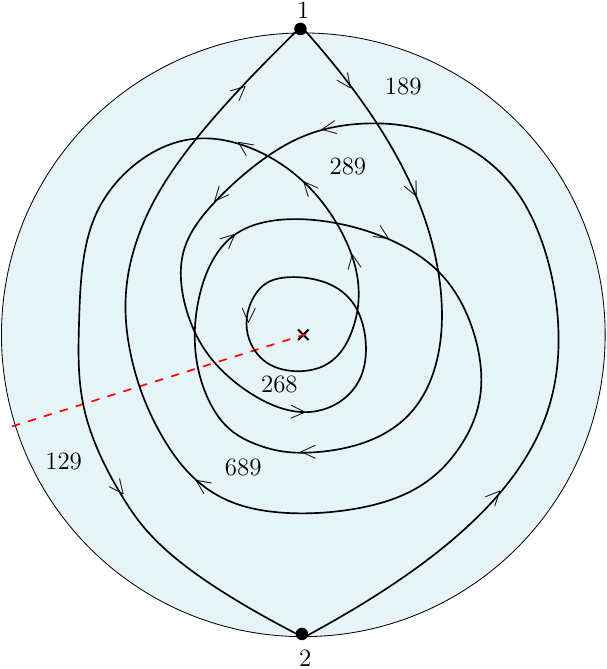}
		\hspace{.5cm}
		\includegraphics[scale=.45]{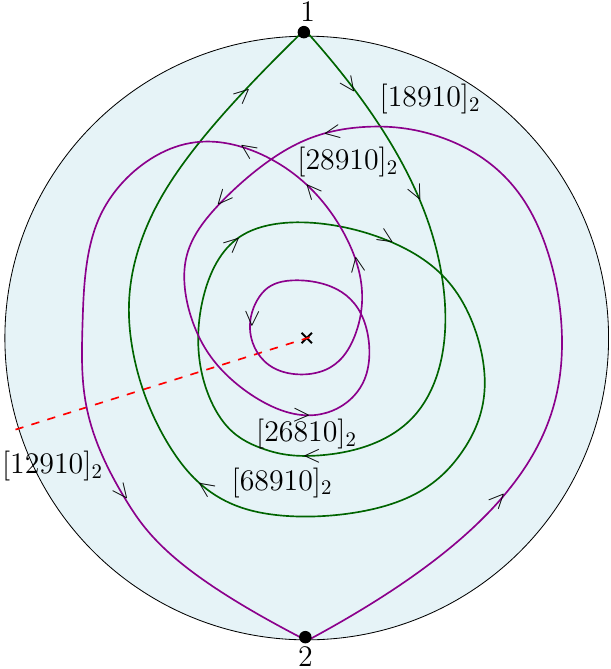}
		\caption{Steps 1 and 2 of the algorithm are illustrated on the left. Steps 4,5 in the middle and 
		the resulting set of labels is drawn on the orbifold diagram on the right.}
		\label{Labels410}
	\end{figure}	
\end{ex}

\begin{ex}\label{ex:labels-d-3}
	In Figure \ref{Labels3n0}, we apply the Algorithm~\ref{alg:labels} to Example \ref{ex:ex2-ex1-orb} (1) 
	with $d=3$, $n=9$. The labels to consider in Step 5 are $5,7,9$. Both $7$ and $9$ satisfy the condition and 
	will get added to all alternating regions. 
	\begin{figure}[H]
		\includegraphics[scale=.65]{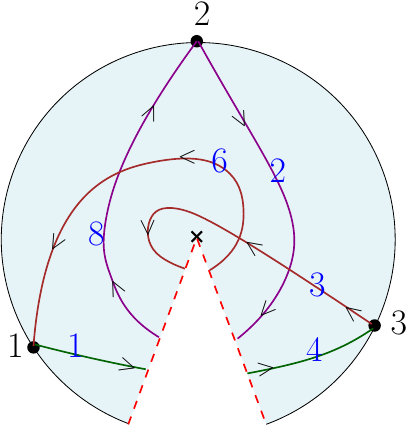}
		\hspace{0.6cm}
		\includegraphics[scale=.65]{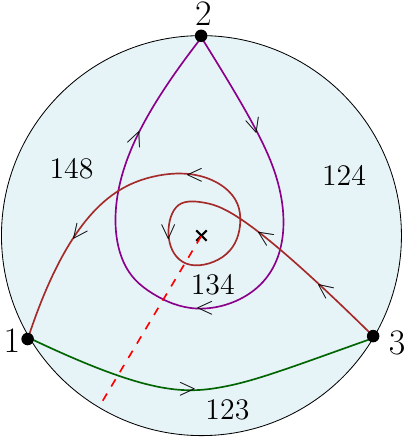}
		\hspace{0.6cm}
		\includegraphics[scale=.65]{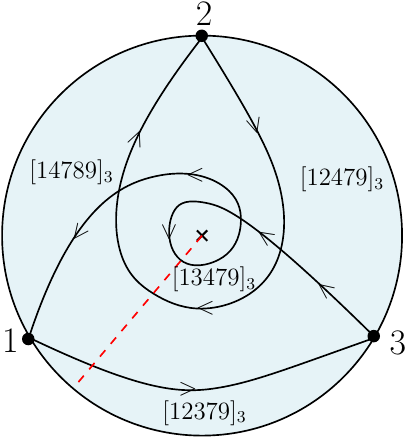}
		\caption{Steps 1 and 2 are depicted on the left, steps 3 and 4 
		in the center and the last step on the right.}
		\label{Labels3n0}
	\end{figure}
	
\end{ex}

\begin{ex}\label{Ex:CentralFixedPoint}
We consider the orbifold diagram $\OO$ of order 3 from Example~\ref{ex:middle-alternating} and want to 
determine its labels.  
In this example we have an alternating region around the orbifold point of 
order 3. We are going to use Definition~\ref{defin:I_O}. On the left hand in  
Figure~\ref{fig:NewLabelEx-1} we have the labels for the Postnikov diagram $\sym_3(\mathcal{O})$, 
the dashed lines indicate three fundamental regions for the action, namely the rotation by $\frac{2\pi}{3}$.  
For each alternating region of the 
diagram on the left in Figure~\ref{fig:NewLabelEx-1}, we assign an equivalence class of $3$-element 
subsets  of $\{1,2, \ldots, 9\}$ by considering a representative given by the label associated with the 
fundamental region containing the vertices  $\{2,3,4\}$ of $\sym_3(\mathcal{O})$ on the right of the same 
Figure. Note that all but the region containing the orbifold point have an equivalence class with three 
elements, one for each copy of the fundamental region. 
The region containing the orbifold point has an equivalence class with just one element because  
the corresponding region in $\sym_3(\mathcal{O})$ is fixed by the action.
\end{ex}

\begin{figure}[ht]
	\includegraphics[scale=.58]{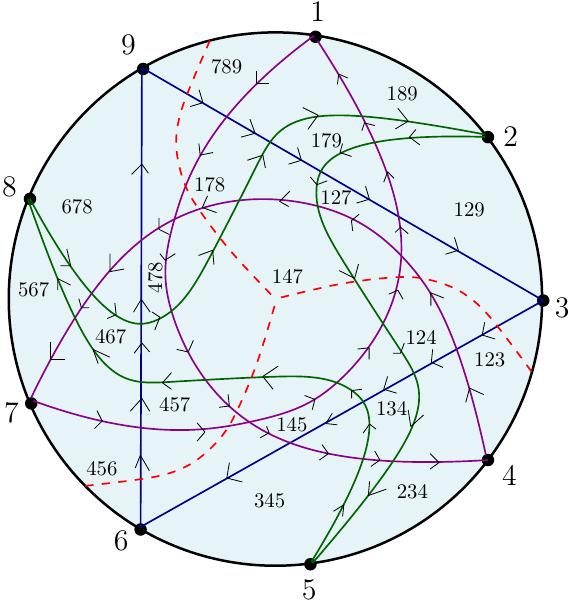}
	\hskip 1cm
	\includegraphics[scale=.6]{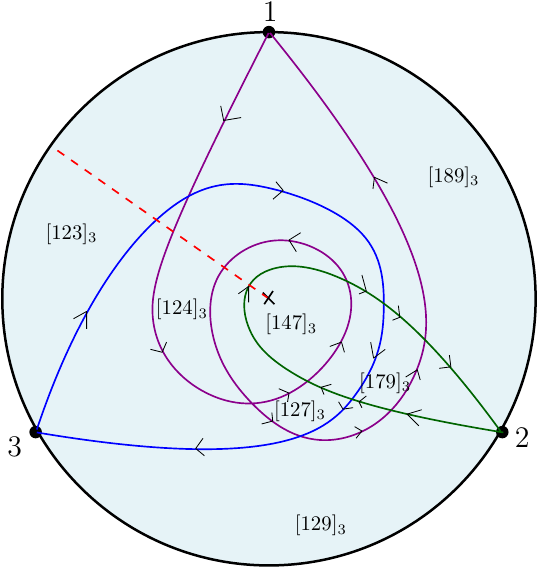}  
	\caption{The first figure shows the labels of $\sym_3(\mathcal O)$, the second figure 
	shows the equivalence classes of the labels for $\OO$  under the equivalence relation $\sim_{3}$}.
	\label{fig:NewLabelEx-1}
\end{figure}

\section{Quivers with potentials}

Now we shall define a quiver with potential (QP for short) associated to an orbifold diagram, in order to 
compare it with the one associated to its cover as in~\cite[Section 3]{BKM16}.

\begin{defin}
	Let $\mathcal P$ be a Postnikov diagram. We associate to it a quiver with potential 
$(Q_{\mathcal P}, W_{\mathcal P})$ as follows. The vertices of $Q_{\mathcal P}$ are given by 
the alternating regions of $\mathcal P$ (recall that we treat boundary regions as alternating). 
For any two alternating regions sharing a crossing, there is an arrow in $Q_{\mathcal P}$ going 
through that crossing following the orientation of the strands.
	Observe that $Q_{\mathcal P}$ is naturally a quiver with faces, with fundamental 
cycles corresponding to cyclical regions of $\mathcal P$. The potential $W_{\mathcal P}$ is defined 
as the sum of these cycles, with signs depending on their orientations. 
\end{defin}

\begin{ex}
In Figure~\ref{fig:dimer-relations}, we have the quiver $Q_{\mathcal P}$ for the Postnikov diagram 
(with labels) from Figure~\ref{fig:P-examples}. On the right, the quiver is drawn with straight arrows. 
The potential $W_{\mathcal P}$ is 
\[
\sum_{i=1}^7\on{sgn}(c_i)c_i = c_1-c_2+c_3-c_4+c_5-c_6+c_7,
\]
for $c_i$ the fundamental cycle of the face indicated by $c_i$, taken with $+$ if and only if $c_i$ is counterclockwise. 
\begin{figure}
\includegraphics[scale=.65]{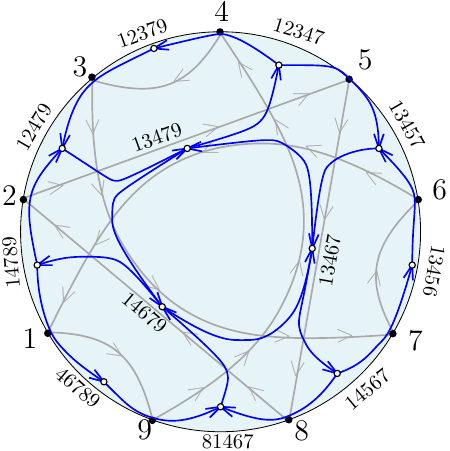}
\hskip .6cm 
\includegraphics[scale=.65]{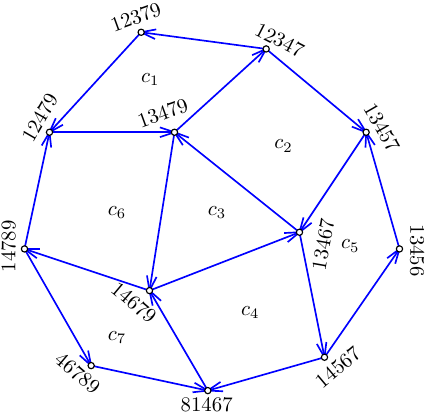}
\caption{The quiver $Q_{\mathcal P}$ for $\mathcal P=\sym_3(\OO)$ from Example~\ref{Exam2}, 
straightened on the right.} 
\label{fig:dimer-relations}
\end{figure}
\end{ex}

The quiver $Q_P$ is called a dimer model with boundary in~\cite{BKM16}. We recall the definition of 
the (frozen) Jacobian algebra associated to a quiver with potential: 
\begin{defin}\label{def:frozen-jacobi}
Let $(Q_{\mathcal P},W_{\mathcal P})$ be the quiver with potential associated to the Postnikov diagram 
$\mathcal P$. The {\em frozen Jacobian algebra associated to the QP} $(Q_{\mathcal P},W_{\mathcal P})$ 
is the completed 
path algebra of $Q_{\mathcal P}$ modulo the closure of the relations given by the cyclic derivatives of the 
potential with respect to {\em internal arrows} (arrows incident with two faces): 
Let $\alpha$ be internal and let $c_1$ and $c_2$ be the two fundamental cycles containing $\alpha$, 
with $\on{sgn}c_1=+$, $\on{sgn}c_2=-$. We write $c_i=\alpha c_i'$ for $i=1,2$. 
Then $\partial_{\alpha}W_{\mathcal P}=c_1'-c_2'$. 
In other words, for any internal arrow $\alpha$, the two paths completing $\alpha$ to a fundamental cycle 
agree. For example, in Figure~\ref{fig:dimer-relations}, the arrow $14679\to 12467$ induces as a 
relation that the path of length 2 from $12467$ to $14679$ is equal to the path of length 3 from 
$12467$ to $14679$. 
\end{defin}

\begin{defin}	\label{def:frozen-jacobi-2}
Now assume that $\mathcal P$ is reduced, i.e.~no reduction moves such as 
in Figure~\ref{fig:PullingStrand} are possible. Then we define the {\em algebra $A(\mathcal P)$ 
of $\mathcal P$} 
as the (completed) frozen Jacobian algebra of $(Q_{\mathcal P}, W_{\mathcal P})$, where the frozen 
vertices correspond to the boundary regions. If $e$ is the idempotent corresponding to the 
vertices of the boundary regions, 
we also define the {\em boundary algebra} $B(\mathcal P)$ {\em of $\mathcal P$} 
to be the idempotent subalgebra $eA(\mathcal P)e$.
\end{defin}

We will give an analogous definition of quiver with potential for orbifold diagrams, in such a way that the 
frozen Jacobian algebras are related to each other by a skew group algebra construction. This requires 
some work.

\begin{defin}\label{Def:Quiver_O}
Let $\OO$ be an orbifold diagram of order $d$. We associate to it a quiver $Q_\OO$ as follows. 
The vertices of $Q_\OO$ are given by the alternating regions of $\OO$ (including the regions on 
the boundary). 
If the orbifold point $\Omega$ is contained in an alternating region, we associate to that region $d$ vertices 
$v_1,\dots, v_d$ of $Q_\OO$. We imagine the $v_i$ as lying on a line orthogonal to the disk above the 
orbifold point. 
For any two vertices which are separated by a crossing of oriented strands, 
there is an arrow in $Q_\OO$ going through that crossing following the orientation of the strands. 
In the case of vertices $v_1,\dots, v_d$ (if present), we draw arrows 
between each of them and all the neighbouring regions but no arrows between these vertices. 
\end{defin}

In case the region containing $\Omega$ is alternating, with an even number $r\ge 2$ of 
arrows incident with it, then each of the vertices $v_i$ has $r$ fundamental cycles incident with it. 

The quiver $Q_\OO$ is also naturally a quiver with faces: 
Its fundamental cycles correspond to 
cyclical regions in $\OO$ which do not involve the orbifold point, together with with $d$ copies of the 
$r$ cycles corresponding to cyclical regions adjacent to the central region containing $\Omega$, 
if this region is alternating. 
Seen as a CW-complex, this quiver with faces consists in this case 
of an annulus where the boundaries are given by non-oriented cycles, together 
with $d$ disks. These disks are all isomorphic as quivers with faces and their boundary cycle 
(which is in general not oriented) is 
identical to the inner boundary cycle of the annulus. These boundary cycles are 
identified with the inner boundary of the annulus, i.e. the $d$ disks are all glued along one of the 
boundary components of this annulus, see~\cite[Proposition~7.7]{GP19}.
If the region containing $\Omega$ is cyclical then
$Q_\OO$, as quiver with faces, is a tiling of the disk.

In what follows, if $\Omega$ belongs to an alternating region, 
we write $c_i^{(1)},\ldots, c_i^{(r)}$ for the $r$ fundamental cycles in 
$Q_\OO$ through $v_i$, for $i=1,\dots, d$. The labeling is done in a way such that the $c_i^{(r)}$ is the unique 
cyclical region adjacent to the central alternating region and intersecting the curve $\gamma$. 

Note that all the fundamental cycles come with an orientation and hence with a sign: We set 
$\on{sgn}(c)$ to be $1$ if $c$ is a counterclockwise fundamental cycle and $-1$ if $c$ is clockwise. 
Then we can define a potential for the quiver $Q_\OO$. 

\begin{defin}\label{Def:Potential_O}
	Let $\OO$ be an orbifold diagram of order $d$ and let $Q_\OO$ be its quiver. 
Let $\mathcal{C}$ be the set of the fundamental cycles of $Q_\OO$. 
We define a potential $W_\OO$ on $Q_\OO$ as follows. 
	\begin{itemize}
		\item Assume that the orbifold point $\Omega$ lies in a cyclical region and 
		let $c$ be the corresponding fundamental cycle. 
		We set 
		\begin{align*}
		W_\OO = \frac{1}{d}\on{sgn}(c)c^d+ \sum_{c'\in\mathcal{C}\setminus \{c\}} \on{sgn}(c')c'.
		\end{align*}
		\item Assume that $\Omega$ lies in an alternating region and let 
$\mathcal{C'}$ the set of all the $c_i^{(j)}$. 
Fix a primitive $d$-th root of unity $\zeta$. 
We set 
		\begin{align*}
		W_\OO = \sum_{c\in\mathcal{C}\setminus \mathcal{C'}} \on{sgn}(c)c+\sum_{j\ne r}\sum_{i=1}^d \on{sgn}(c_i^{(j)})c_i^{(j)}+\sum_{i=1}^d\zeta^i\on{sgn}(c_i^{(r)})c_i^{(r)}.
		\end{align*}
	\end{itemize}
\end{defin}

\begin{rmk}
	The above definition of $W_\OO$ depends on the choice of $\zeta$. However, Theorem~\ref{thm:main} shows that the frozen Jacobian algebras corresponding to different choices are isomorphic.
\end{rmk}

\begin{rmk}
	The potential $W_\OO$ of Definition~\ref{Def:Potential_O} is equal, for suitable choices (see the proof of  Proposition~\ref{prop:sgas}), to the potential $W_G$ of \cite[Notation~3.18]{GP19} divided by $d$. It is also equal to the potential $W_G$ of \cite[Definition~5.3]{GPP19}.
\end{rmk}

\begin{defin}
	For an orbifold diagram $\mathcal O$, define the {\em algebra $A(\mathcal O)$ of $\OO$} 
as the frozen Jacobian algebra of $(Q_\OO, W_\OO)$, with frozen vertices the boundary vertices. 
If $e$ is the idempotent corresponding to the boundary vertices, we define the 
{\em  boundary algebra} $B(\mathcal O)$ {\em of $\OO$} to be 
the algebra $B(\mathcal O)= eA(\mathcal O)e$.
\end{defin}	

Note that whenever for $\OO$, the orbifold point $\Omega$ is contained in a cyclic region, 
we have a new type of 
terms in the relations for $A(\OO)$: In this case, the cycle (or loop) appears as term 
$c^d$ in the potential. Taking derivatives with respect to arrows of this cycle (with respect to the 
arrow of the loop) gives a $d$-fold term in the relations for $A(\mathcal O)$ (with this $d$ cancelling out the $\frac{1}{d}$ coefficient). For the 
quiver with potential in Figure~\ref{fig:quiver-O}, 
taking the derivative with respect to the loop arrow $c$ 
gives $c_1'=c^2$, where $c_1'$ is the path $[14679]_3\to [14789]_3\to [12479]_3$.

\begin{ex}
	We illustrate Definitions \ref{Def:Quiver_O} and \ref{Def:Potential_O} on the orbifold diagram 
$\mathcal{O}_1$ of order 3 from Example~\ref{ex:ex2-ex1-orb} (1) with labels in 
Example~\ref{ex:labels-d-3} and on the orbifold diagram $\mathcal{O}_2$ from 
Example~\ref{ex:ex2-ex1-orb} (2) with labels in Example~\ref{ex:labels-d-5}. 
The quivers $Q_{\mathcal{O}_1}$ and $Q_{\mathcal{O}_2}$ are depicted 
in Figure~\ref{fig:quiver-O} and Figure~\ref{fig:quiver-1}, respectively. 
%
 \begin{align*}
 W_{\mathcal{O}_1}&=-c_1+c_2+\frac{1}{3}c^3,\\
 W_{\mathcal{O}_2}&=-c_1+c_2+c_3-c_4+\frac{1}{5}c^5. 
 \end{align*}
	\begin{figure}[H]
		\includegraphics[scale=.63]{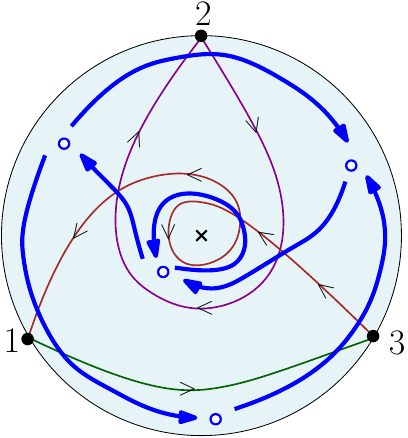}
		\hspace{.5cm}
		\includegraphics[scale=.63]{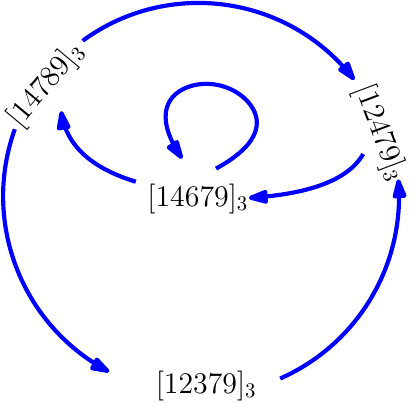}
		\hspace{.5cm}
		\includegraphics[scale=.63]{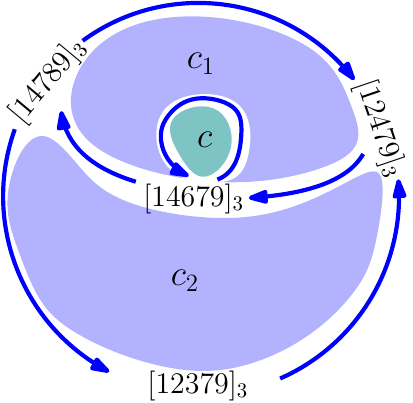}
		\caption{The quiver $Q_{\mathcal{O}_1}$ associated to the orbifold diagram of 
		Examples~\ref{ex:ex2-ex1-orb}(1).}
		\label{fig:quiver-O}
	\end{figure}
\end{ex}

\begin{figure}[H]
	\includegraphics[scale=.42]{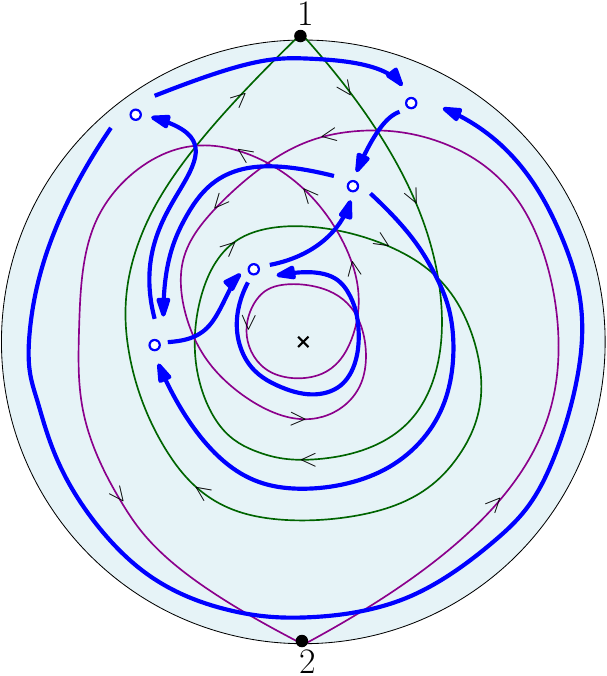}
	\hspace{.5cm}
	\includegraphics[scale=.42]{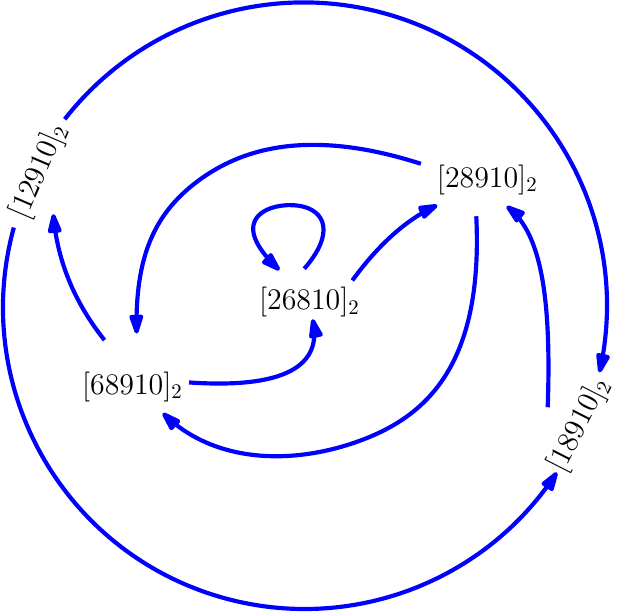}
	\hspace{.5cm}
	\includegraphics[scale=.42]{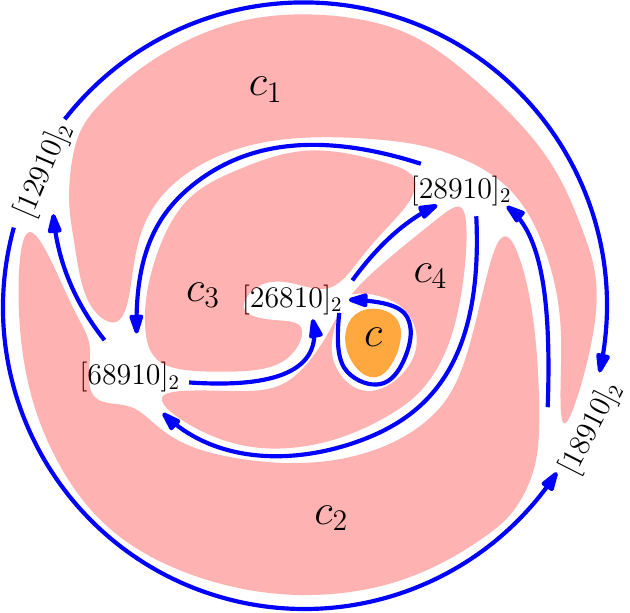}	
	\caption{The quiver $Q_{\mathcal{O}_2}$ associated to the orbifold diagram of 
	Examples~\ref{ex:ex2-ex1-orb}(2).}
	\label{fig:quiver-1}
\end{figure}


\begin{figure}[H]
	\includegraphics[scale=.6]{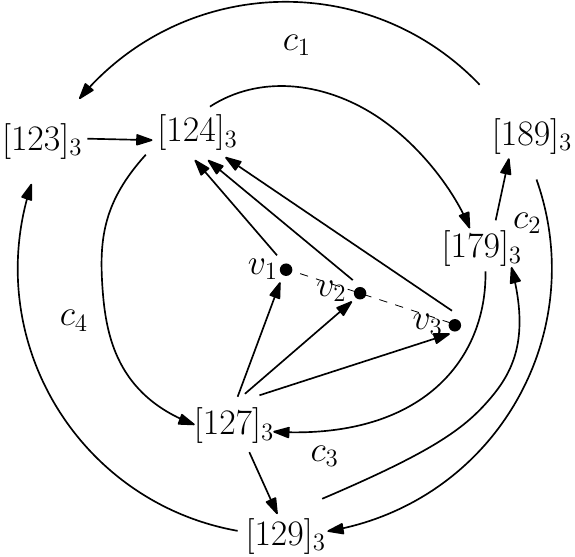}  
	\caption{The quiver $Q_{\mathcal{O}}$ associated to an orbifold diagram $\mathcal{O}$ of order 3 with the orbifold point inside an alternating region and the fundamental cycles $c_i$ far away from the orbifold point.}
	\label{fig:QuiverFixedPoint}
\end{figure}

\begin{ex}\label{Ex:QPCentralFixedPoint}
Let us consider the quiver with potential for the orbifold diagram $\mathcal{O}$ given in 
Example \ref{Ex:CentralFixedPoint}. Recall that this example is particularly interesting because it has an 
alternating region containing the orbifold point. We are going to follow Definition \ref{Def:Quiver_O}. 
The quiver $Q_{\mathcal{O}}$ is depicted in Figure \ref{fig:QuiverFixedPoint}. Note that on the right of 
Figure \ref{fig:NewLabelEx-1} the alternating region containing the orbifold point got the label 
$[147]_3$. Following Definition \ref{Def:Quiver_O}, the label $[147]_3$ yields three vertices 
called $v_1$, $v_2$ and $v_3$ on $Q_{\mathcal{O}}$, see Figure \ref{fig:QuiverFixedPoint}. 
Since the alternating region containing $\Omega$ is given by 2 arrows, we have 
two cyclic regions around this region /with label $[147]_3$). One 
is a triangle given by $\{[124]_3, [127]_3, [147]_3\}$, 
the other one is a quadrilateral given by $\{[127]_3, [147]_3, [124]_3, [179]_3 \}$. 
We write $c_i^1$ ($i=1,2,3$) to denote the three fundamental cycles arising from the triangle at 
$[147]_3$ and $c_i^2$ for the three fundamental cycles arising from the quadrilateral. 
These six faces are illustrated in Figure~\ref{fig:FacesFixedPoint} by different shadings. 
The labeling of the other faces is given in Figure~\ref{fig:QuiverFixedPoint}. 
We thus have the potential 
\[
W_{\mathcal{O}}= +c_1-c_2+c_3-c_4+c_1^{(1)}+c_2^{(1)}+c_3^{(1)}-\zeta c_1^{(2)}-\zeta^2c_2^{(2)}-c_3^{(2)},
\]
where $\zeta$ is a primitive third root of the unity. 
\end{ex}

\begin{figure}[ht]
	\includegraphics[scale=.55]{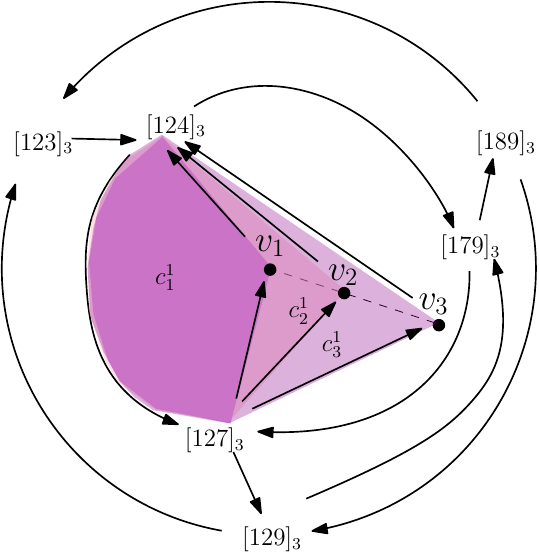}\hspace{0.8cm}
	\includegraphics[scale=.55]{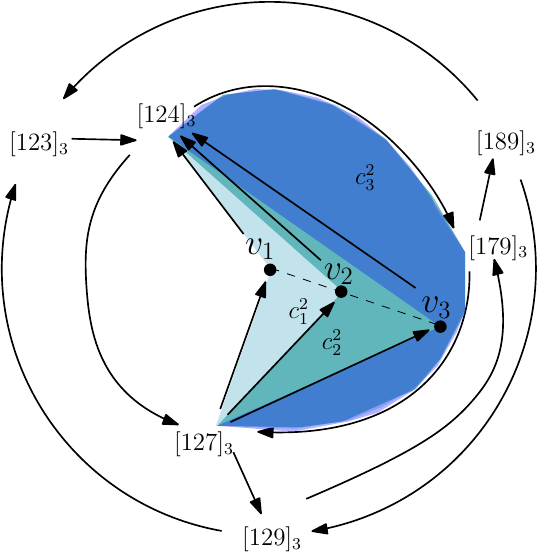}  
	\caption{The fundamental cycles around the orbifold point on $Q_{\mathcal{O}}$.}
	\label{fig:FacesFixedPoint}
\end{figure}


\section{Skew group algebras from orbifold diagrams}

In this section we recall the notion of a skew group algebra and prove properties which we will need later. 
We want to relate the algebras $A(\OO)$ and $B(\OO)$ of an orbifold diagram $\OO$ 
with the algebras $A({\sym}_d(\OO))$ and $B({\sym}_d(\OO))$ of the associated 
symmetric Postnikov diagram. 
In particular, we want to describe endomorphism algebras (see Lemma~\ref{lm:morita-equ} 
and Lemma~\ref{lem:isom-1}). This will be a key ingredient for 
Section~\ref{sec:categories} where we study the associated module categories. 

\begin{defin}\label{def:skew-group}
	Let $S$ be an algebra with an action of a finite group $G$ by automorphisms. 
The \emph{skew group algebra} $S*G$ is $S\otimes_\C \C G$ as a vector space, with multiplication 
linearly induced by  $$(s\otimes g)(t\otimes h)=s g(t)\otimes gh$$
(where $s,t\in S$, $g,h\in G$). 
\end{defin}

The group $G$ acts on the category $\on{mod}(S)$ (left modules) by twists, that is 
$g(L) =\tw{g}{L}$ which is $L$ as a set, with $S$-action 
given by $s\cdot_{\tw{g}{L}} l = g(s)l$, noting that for all $g,h\in G$, we have $\tw{g}(\tw{h}L)=\tensor[^{hg}]{L}{}$. 
This gives an autofunctor of $\on{mod}(S)$ by letting $G$ act trivially on morphisms. To simplify the 
notation, we will write morphisms as $f$ instead of $\tw{g}{f}$. 

There is an induction functor $F$ from $\on{mod}S$ to $\on{mod}(S*G)$ sending $M$ 
to $FM:=(S*G)\otimes_S M$. 

The \emph{category of $G$-equivariant $S$-modules} $\on{mod}(S)^G$ is defined to have 
as objects the pairs 
$\left(L, \left(\varphi_g\right)_{g\in G}\right)$, where $L$ is an object of $\on{mod}(S)$ and 
where the $\varphi_g: L\to \tw{g}{L}$ are isomorphisms satisfying the following:
\begin{enumerate}
	\item  $\varphi_{hg}$ $= \varphi_h\circ \varphi_g$,
	\item $\varphi_1 = \on{id}_L$.
\end{enumerate}
Morphisms in $\on{mod}(S)^G$ are morphisms of $S$-modules which intertwine the $\varphi_g$. Precisely, if $\left(L, \left(\varphi_g\right)_{g\in G}\right)$ and $\left(N, \left(\psi_g\right)_{g\in G}\right)$ are in $\on{mod}(S)^G$, then a map $f:L\to N$ of $S$-modules is a morphism in $\on{mod}(S)^G$ if there is a commutative diagram
\[\xymatrix{
  L \ar^f[r] \ar[d]_{\varphi_g}& N\ar[d]^{\psi_g} \\
  \tw{g}{L} \ar^f[r] & \tw{g}{N}
}\]
for every $g\in G$.

In what follows, we will use the fact that the category of finitely generated $S*G$-modules is equivalent to the 
category $\on{mod}(S)^G$, as an $S*G$-module can be viewed as as $S$-module with a compatible $G$-action.

%

From now on, let us assume that $G$ is finite of order $d$. 
We will use the skew-group construction for $G$ 
and the algebras associated to orbifold diagrams (of order $d$). 
On one hand, we will have modules which are invariant under the 
group action and on the other hand modules, which are permuted by the elements of $G$. 
In Section~\ref{sec:categories},  
we will use the effect of the $F$ on modules which are built up from these two types, 
i.e. we will need to study the functor $F$ on a direct sum $M_0\oplus M$ where $M_0$ is invariant 
under the group action and where $M$ is a sum of isomorphic summands for each group element 
(see Lemma~\ref{lem:iso-representatives}).

Let $M_0\in \on{mod}(S)$ be a module such that $\tw{g}{M}_0=M_0$ for all $g\in G$.

Then 
$$
FM_0= \left(\bigoplus_{g\in G}M_0, (\varphi_h)_{h\in G}\right) = 
\left(\bigoplus_{\sigma\in G^*}M_0, (\lambda_h)_{h\in G}\right)=\bigoplus_{\sigma\in G^*}(M_0, (\sigma(h)\on{id})_{h\in G} ),$$ 
where $(\lambda_h)_{\sigma, \sigma} =\sigma(h)\on{id}_{M_0}$ 
and the other entries of $\lambda_h$ are zero. 
This is because the subset 
\[
\left\{x\in \bigoplus_{g\in G}M_0\ \left.\right\vert\ h(x) = \sigma(h)x\right\}\subseteq \bigoplus_{g\in G}M_0
\]
is isomorphic to $M_0$ as an $S$-module (intuitively, we are decomposing the regular representation into the irreducible characters of $G$).

Let $M_1\in \on{mod}(S)$ be any module. Define $M_g = \tw{g}{M}_1$ for every $g\in G$, and 
$M= \bigoplus_{g\in G}M_g$. Then define $\varphi_g:M\to \tw{g}{M}$ to be the canonical isomorphism 
that permutes the summands, i.e.~$\varphi_g(M_h)= \tw{g}{M}_{g^{-1}h}$. This makes 
$\left(M, \left(\varphi_g\right)_{g\in G}\right)$ an object in $\on{mod}(S)^G$. 
As a module over $S*G$, it is $M$ 
with $b\otimes g$ 
acting by $\varphi_g\circ b\cdot\circ \varphi_g^{-1}$. 
We will just write $M$ for this object of 
$\on{mod}(S*G)$ as well as for the $S$-module $M$.

\begin{rmk}\label{rem:module-SG}
Observe that we really mean $M$ as an $(S*G)$-module, and not $FM$. Indeed, we have 
$FM\cong M^{\oplus d}$, and in fact $M= FM_1$ naturally. 
As vector spaces, $M=\mathbb{C}G\otimes_{\mathbb{C}} M_1$. 
\end{rmk}


\begin{lemma}\label{lem:isom-1}
With the notation as above, there is an algebra isomorphism 
\[
\on{End}_S(M_0\oplus M)*G \cong \on{End}_{S*G}(FM_0\oplus FM)
\]
which maps any $f\otimes g$ to 
\[
\begin{array}{lcl}
F(M_0\oplus M) & \to & F(M_0\oplus M) \\
(s\otimes h)\otimes m & \mapsto & (s\otimes hg^{-1})\otimes f(\varphi_g(m)) 
\end{array}
\]
\end{lemma}

\begin{proof}
We will show that this is a composition of two vector space isomorphisms $\theta_1$ and $\theta_2$ which 
we now define. First consider 
\[
\theta_1:\on{End}_S(M_0\oplus M)*G \to \on{Hom}_S(M_0\oplus M,F(M_0\oplus M)).
\]
%
The map $\theta_1$ can be obtained as follows: 
we send $f\otimes g\in \on{End}_S(M_0\oplus M)\otimes \C G$ to the homomorphism 
\[
\begin{array}{ccl}
M_0\oplus M & \to & F(M_0\oplus M) \\
m & \mapsto & (1\otimes g^{-1})\otimes f(\varphi_g(m)). 
\end{array}
\]
in $\on{Hom}_S(M_0\oplus M,F(M_0\oplus M))$. 
Then consider 
\[
\theta_2:\on{Hom}_S(M_0\oplus M,F(M_0\oplus M)) \to \on{End}_{S*G}(F(M_0\oplus M))
\]
This is the adjunction isomorphism of vector spaces which maps any 
$f\in \on{Hom}_S(M_0\oplus M, F(M_0\oplus M))$ to 
\[
\begin{array}{lcl}
F(M_0\oplus M)  & \to & F(M_0\oplus M) \\
(s\otimes g)\otimes m & \mapsto & (s\otimes g)\cdot f(m) 
\end{array}
\]

\noindent
The composition $\theta_2\circ\theta_1$ sends $f\otimes g$ to 
$(s\otimes h)\otimes m \ \mapsto \ (s\otimes hg^{-1})\otimes f(\varphi_g(m))$. 
%
%
Furthermore, the fact that the composition is multiplicative is a direct check. 
So $\theta_2\circ\theta_1$ is an 
algebra homomorphism. 
\end{proof}

Now we introduce two homomorphisms of $S*G$-modules.

\begin{defin} 
(1) 
Let $i:FM_0\oplus M\to FM_0\oplus FM$ be the following map: 
\[
(m_0,m) \in FM_0\oplus M \longmapsto \left(m_0,\frac{1}{|G|}\sum_{g\in G}(1\otimes g)\otimes\varphi_g^{-1}(m)\right)
\]

(2) 
Let $p:FM_0\oplus FM\to FM_0\oplus M$ be the following map 
\[
\left( m_0,\sum_{g\in G}(s_g\otimes g)\otimes m_g\right) \longmapsto 
\left( m_0,\sum_{g\in G} s_g \cdot \varphi_g(m_g)\right)
\]

\end{defin}

The proof of the following follows immediately from the definition. 

\begin{lemma}\label{lem:maps-p-i}
The maps $i$ and $p$ are homomorphisms of $S*G$-modules. 
Furthermore, their composition $p\circ i$ is the identity homomorphism. 
\end{lemma}

Next we set $e$ to be 
the preimage under $\theta_2\circ\theta_1$ of the element $i\circ p$ of 
$\on{End}_{S*G}(FM_0\oplus FM)$. This is an idempotent in $\on{End}_S(M_0\oplus M)*G$ 
because $i\circ p$ is an idempotent of $\on{End}_{S*G}(FM_0\oplus FM)$. 

\begin{lemma}\label{lm:old-lemma-5-3}
There is an algebra isomorphism 
$\on{End}_{S*G}(FM_0\oplus M)\cong e\cdot (\on{End}_S(M_0\oplus M)*G)\cdot e$. 
\end{lemma}

\begin{proof}
This follows using the maps 
\[
\begin{array}{rccc}
\beta_1: & \on{End}_{S*G}(FM_0\oplus FM) & \longrightarrow & \on{End}_{S*G}(FM_0\oplus M) \\ 
 & f & \longmapsto & p \circ f \circ i \\
 & \\ 
\beta_2: & \on{End}_{S*G}(FM_0\oplus M) & \longrightarrow & \on{End}_{S*G}(FM_0\oplus FM) \\ 
 & f' & \longmapsto & i \circ f'\circ p 
\end{array}
\]
with $(\beta_1\circ\beta_2)(f')=f'$ and $(\beta_2\circ\beta_1)(f)=i\circ p\circ f\circ i\circ p$ and since 
$\on{End}_{S}(M_0\oplus M)*G\cong \on{End}_{S*G}(FM_0\oplus FM)$ by 
Lemma~\ref{lem:isom-1}. 
\end{proof}

Then since $FM$ is isomorphic to $M^{|G|}$ as a $S*G$-module, the algebras 
$\on{End}_{S*G}(FM_0\oplus M)$ and $\on{End}_S(M_0\oplus M)*G$ are Morita equivalent: 

\begin{lemma}\label{lm:morita-equ}
With the notation as above, we have 
$\on{End}_{S*G}(FM_0\oplus M) \sim \on{End}_S(M_0\oplus M)*G$. 
\end{lemma}

\section{Characterising the algebras arising from orbifold diagrams} 
\label{sec:categories}

In this section we combine the results of the previous sections to characterise the algebras 
$A(\OO)$ and $B(\OO)$.
From now on, we assume that $\OO$ is a reduced orbifold diagram on a disk with $n_0$ marked points 
and that its cover ${\sym}_d(\OO)$ 
is a (reduced) Postnikov diagram on a disk with $n=n_0d$ marked points.
By Proposition~\ref{prop:orbifold-is-quotient}, this is the case as soon as $d>2$. 
There are examples of orbifold diagrams of order 2 where ${\sym}_2(\OO)$ is also a Postnikov diagram and the results in this 
section hold in this case. 

Let $\mathcal P$ be a $d$-symmetric Postnikov diagram. Let $G$ be the cyclic group generated 
by clockwise rotation by $\frac{2\pi}{d}$. This group acts on both $A(\mathcal{P})$ and 
$B(\mathcal{P})$ by automorphisms in a natural way, and exactly this group and its action that we 
fix when we take skew group algebras. 
Before restricting to the Grassmannian setting, we present a general result. 

We will use the construction of the quiver with potential of a skew group algebra given 
in \cite{GP19}. For convenience of the reader, we summarize here some points about 
this construction.
If $(Q, W)$ is a QP and $G$ is a finite group acting on $Q$ fixing $W$, it is known 
by the work of Le Meur, \cite{leM20}, that the skew group algebra of the Jacobian algebra of $(Q, W)$ is 
Morita equivalent to the Jacobian algebra of a new QP $(Q_G,W_G)$. 
(We will use $\sim$ below to denote Morita equivalence). 
Under certain assumptions which are satisfied in the case of a symmetric Postnikov diagram 
with $G$ acting by rotations, one can describe $(Q_G, W_G)$ explicitly. The quiver $Q_G$ 
was constructed in \cite{RR85}, and the potential $W_G$ is defined in \cite[Notation~3.18]{GP19} after making certain choices, notably: a set of representatives of vertices of $Q$, and a 
suitable set of representatives of cycles appearing in $W$. The potential $W_G$ depends 
on these choices (and on the choice of a primitive root of unity), but the resulting Jacobian algebras 
are isomorphic.

\begin{prop}\label{prop:sgas}
	Let $\OO$ be an orbifold diagram of order $d$. 
Then we have the Morita equivalences 
$$
A(\OO)\sim A({\sym}_d(\OO))*G
$$ 
and 
$$
B(\OO)\sim B({\sym}_d(\OO))*G.
$$
\end{prop}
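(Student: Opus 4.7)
The plan is to reduce the statement to Le Meur's theorem, in the explicit form given by Giovannini--Pasquali \cite{GP19} for the skew group algebra of a Jacobian algebra under a group action fixing the potential. Concretely, the cyclic group $G = \langle r \rangle$ of rotation by $\frac{2\pi}{d}$ acts on the QP $(Q_{\sym_d(\OO)}, W_{\sym_d(\OO)})$ by permuting vertices, arrows and fundamental cycles, and fixes the potential. By \cite{leM20} and the explicit construction in \cite[Notation~3.18]{GP19}, the skew group algebra $A(\sym_d(\OO)) * G$ is Morita equivalent to the (frozen) Jacobian algebra of an explicit QP $(Q_G, W_G)$. So the real task is to verify that after choosing representatives of orbits and of cycles appropriately, this $(Q_G, W_G)$ matches $(Q_\OO, W_\OO)$ on the nose, up to the factor of $d$ noted in the remark after Definition~\ref{Def:Potential_O}.

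First I would match the quivers. The vertices of $Q_G$ in the Reiten--Riedtmann/GP19 prescription are parameterised by pairs (orbit, irreducible character of the stabiliser). Because the rotation acts freely on the alternating regions of $\sym_d(\OO)$ except possibly on the central region, the vertices of $Q_G$ are in bijection with alternating regions of $\OO$ in all cases except when the center is alternating, in which case the central region is fixed by all of $G$ and produces $d$ vertices, one per character of $\Z/d$. This is exactly Definition~\ref{Def:Quiver_O}, and by the same construction the arrows of $Q_G$ are in bijection with the arrows of $Q_\OO$ (including the $d$-fold set of arrows incident with each $v_i$ when the center is alternating). The frozen structure also agrees: boundary regions are never fixed (they lie on the boundary, while $G$ acts by rotation around the orbifold point), so the $G$-invariant idempotent on boundary vertices in $A(\sym_d(\OO)) * G$ corresponds under the Morita equivalence to the boundary idempotent of $(Q_G, W_G)$.

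Next I would match the potentials. Cyclical regions of $\sym_d(\OO)$ come in two types. Those whose $G$-orbit has size $d$ descend to a single cyclical region in $\OO$, and the corresponding terms of $W_{\sym_d(\OO)}$ collapse in $W_G$ to a single signed fundamental cycle (this accounts for the sum $\sum_{c' \in \mathcal{C} \setminus \{c\}} \operatorname{sgn}(c')c'$ in the cyclical-center case, and for $\sum_{c \in \mathcal{C} \setminus \mathcal{C}'} \operatorname{sgn}(c)c$ in the alternating-center case). The two remaining situations are precisely where the special terms of Definition~\ref{Def:Potential_O} appear: if the center of $\OO$ is a cyclical region, there is a single fixed cyclical face upstairs whose fundamental cycle $\tilde c$ satisfies $\tilde c \mapsto c$ and the GP19 recipe contributes $\operatorname{sgn}(c) c^d$, yielding the term $\tfrac{1}{d}\operatorname{sgn}(c) c^d$ after the normalisation comment of the remark; if the center is alternating, then the $r$ fundamental cycles around it upstairs split into orbits that produce $d$ parallel cycles at each $v_i$, and the character-twisting in the GP19 potential (with a primitive $d$-th root of unity $\zeta$) yields exactly the $\sum_{i=1}^d \zeta^i \operatorname{sgn}(c_i^{(r)}) c_i^{(r)}$ term at one chosen ``axis'' of cycles, with plain sign contributions at the others. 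After verifying this termwise equality, $W_G = d \cdot W_\OO$ and the Jacobian algebras are the same.

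Combining these matches proves $A(\OO) \sim A(\sym_d(\OO)) * G$. The statement for boundary algebras then follows by passing to the boundary idempotent: since the boundary vertices are a free $G$-set, the boundary idempotent $e \in A(\sym_d(\OO))$ is $G$-invariant, so $e \otimes 1$ is an idempotent of $A(\sym_d(\OO)) * G$ with $e(A(\sym_d(\OO)) * G)e = eA(\sym_d(\OO))e * G = B(\sym_d(\OO)) * G$, while under the Morita equivalence the same idempotent picks out precisely $B(\OO) = eA(\OO)e$. Alternatively, Lemma~\ref{lem:technical} applied with $S = A(\sym_d(\OO))$ and a suitable decomposition $X = M_0 \oplus M$ of the boundary projective module directly identifies $B(\OO)$ with an idempotent subalgebra Morita equivalent to $B(\sym_d(\OO)) * G$. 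The main obstacle will be the bookkeeping in the second step: carefully choosing orbit representatives, signs, and the $\zeta$-twisting so that the GP19 potential really lands on $d \cdot W_\OO$ in the alternating-center case, since there the $d$ copies of fundamental cycles are not symmetric in the potential and one must check the character-twisted sum gives the claimed normal form.
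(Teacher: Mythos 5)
Your proposal is correct and follows essentially the same route as the paper's proof: both reduce to the Le Meur/Giovannini--Pasquali description of the QP for a skew group algebra, match the quivers via the Reiten--Riedtmann construction, split the verification of $W_G=d\cdot W_\OO$ into the cyclical- and alternating-centre cases with a careful choice of orbit representatives, and obtain the boundary statement from the $G$-invariance of the boundary idempotent (which the paper packages as an appeal to~\cite[Lemma~2.2]{RR85}). The one point worth making explicit, which the paper does but you only glance at, is that GP19's theorem is stated for ordinary Jacobian algebras, and one must remark that it extends to frozen Jacobian algebras because the set of internal (non-frozen) arrows is $G$-stable, so that the Jacobian ideal is still generated by a $G$-stable set of cyclic derivatives.
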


\begin{proof}
	We will use Theorem~3.20 of~\cite{GP19}, with $\Lambda$ being $A({\sym}_d(\OO))$. 
We remark this theorem still works if we replace the usual definition of the Jacobian ideal 
by any ideal generated by cyclic derivatives with respect to 
arrows (see Definition~\ref{def:frozen-jacobi})  
provided that these arrows are closed under the $G$-action. 
In particular, the statement immediately extends to the 
case of frozen arrows, which we are considering in our definition of $A(\OO)$. 
In our case, the frozen arrows are the boundary arrows, which indeed form a set closed 
under the $G$-action.

Moreover, the $G$-orbits of the boundary arrows for 
$A({\sym}_d(\OO))$ correspond exactly (in the sense of~\cite[Notation 3.13]{GP19}) 
to the boundary arrows of $A(\OO)$. It follows that even in our case, it is enough to show that 
the QP $(Q_G, W_G)$ of~\cite{GP19} is equal to $(Q_\OO, W_\OO)$, if we make appropriate choices. 
The fact that $Q_G= Q_\OO$ is clear, as the 
two constructions both agree with the general construction presented 
in~\cite[Section 2]{RR85}. 
This is also illustrated in Examples 8.1 and 8.3 of~\cite{GP19}. 
	
	If the central region is cyclical, we are in the special case where $G$ acts freely on the whole 
quiver $Q$ of $\Lambda=A({\sym}_d(\OO))$  (and hence on $\Lambda$), which means that 
$\Lambda*G$ is Morita equivalent to the quotient $\Lambda/G$. In particular, the potential $W_\OO$ 
we have defined in this case makes $A(\OO)$ isomorphic to this quotient and we are done.
	
	It remains to check that the potential $W_\OO$ equals the potential $W_G$  
of~\cite[Notation~3.18]{GP19} (for appropriate choices) 
in the case where the central region is alternating. 
Following~\cite[\S3.2]{GP19}, we choose a set $\mathcal E$ of representatives of vertices of $Q$. 
In order to get the simple formulas we gave for $W_\OO$, we should be careful in how we choose the 
set $\mathcal E$. Let us pick a simple curve joining $\Omega$ to the boundary in $\OO$, draw its 
$d$ preimages under the quotient in ${\sym}_d(\OO)$, and consider one of the regions bounded by 
two consecutive copies, see for example Figure~\ref{fig:NewLabelEx-1}. 
We pick $\mathcal E$ to consist of exactly the vertices 
in this region. If the curve cuts an alternating region in two, we pick the part which is clockwise from the two 
copies of the simple curve and inside this region. 

We will now introduce some notation borrowed from~\cite{GP19}. Consider a cycle $c$ appearing in $W$, say with a scalar $a(c)$. Then there are two possbile cases:
\begin{itemize}
	\item either $c$ does not go through the central region (and all its vertices have trivial stabiliser), 
	\item or $c$ goes through the central region (and all its vertices except the one corresponding to the central region have trivial stabiliser).
\end{itemize}
Following~\cite[Notation~3.6]{GP19}, we say that $c$ is \emph{of type (i)} in the former case and \emph{of type (ii)} in the latter (we remark that \cite{GP19} treats additional cases which do not appear here).
Our construction will associate a summand in $W_G$ to every $G$-orbit of cycles appearing in $W$ (recall that $W$ is indeed $G$-invariant).
 By possibly applying the $G$-action, we can assume that $c$ is equal to 
$$
\xymatrix{
	I_0 = g^{t_1+\dots+t_{l}}(I_l) \ar[rr]^-{g^{t_1+\dots+t_{l-1}}(\alpha_l)} & &
	g^{t_1+\dots+t_{l-1}}(I_{l-1})\ar[r] & \cdots \ar[r]^-{g^{t_1}(\alpha_2)} & g^{t_1}(I_1)
	\ar[r]^-{\alpha_1} &I_0,
}
$$
where $\alpha_1,\dots,\alpha_l$ are arrows of $Q$, $t_1, \dots, t_l$ are integers, and $I_0, \cdots, I_l$ are vertices in $\mathcal E$. If $c$ is of type (i), this choice is not unique. If $c$ is of type (ii), then we can also assume that $I_1$ is the label of the central region (and then the choice is in fact unique).

Note that each $t_i$ is equal to $-1, 1$ or $0$ according to whether the arrow $\alpha_i$ crosses the cut $\gamma$ of step 1 of Algorithm~\ref{alg:labels} clockwise, counterclockwise or not at all, respectively.

The potential $W_G$ of~\cite[Notation~3.18]{GP19} is defined to be the sum of the contributions of all ($G$-orbits of) cycles appearing in $W$, as follows.
\begin{itemize}
	\item If $c$ is of type (i), then (each $\alpha_i\otimes g^{t_i}$ is an arrow of $Q_G$ and) its contribution to $W_G$ is $$
	a(c) (\alpha_1\otimes g^{t_1}) \cdots  (\alpha_l\otimes g^{t_l}).$$
	\item If $c$ is of type (ii), then:
	\begin{itemize}
		\item by our choice, we have $t_1=0$, 
		\item each $\alpha_i\otimes g^{t_i}$ is an arrow of $Q_G$ for $i\neq 1, 2$,
		\item for $\mu=0, \dots, d-1$, both $\alpha_1\otimes e_\mu$ and $g^{-t_2}(\alpha_2)\otimes e_\mu$ are arrows of $Q_G$, where $e_\mu$ is the element $$
		e_\mu = \frac{1}{d} \sum_{i=0}^{d-1}\zeta^{i\mu}g^i,$$
		\item the contribution of $c$ to $W_G$ is defined to be
		$$
		\sum_{\mu=0}^{d-1}a(c)\zeta^{-t_2\mu}(\alpha_1\otimes e_\mu)(g^{-t_2}(\alpha_2)\otimes e_\mu)(\alpha_3\otimes g^{t_3})\dots(\alpha_l\otimes g^{t_l}).$$
	\end{itemize}
\end{itemize}

The contributions of the cycles of type (i) to both $W_G$ 
and $W_\OO$ (where they correspond to $\mathcal C\setminus \mathcal C'$) are easily seen to agree, 
so it remains to check what happens with the cycles going through the middle (those giving rise to 
the cycles in $\mathcal C'$).
	
	The region between the two curves we chose on ${\sym}_d(\OO)$ contains $\frac{r}{2}$ outgoing 
and $\frac{r}{2}$ incoming arrows to the central vertex, so that $r-1$ cycles $\hat c$ corresponding to 
the cycles of type (ii) have $t_2= 0$. 
The contribution of these cycles to $W_G$ is then precisely 
the same as the part of the sum with no roots of unity in our definition of $W_\OO$. There is exactly 
one cycle $\hat c$ missing, which has $t_2$ equal to $\pm 1$ depending on whether it is clockwise 
or not. By possibly choosing $\zeta^{-1}$ instead of $\zeta$, we can make the remaining terms in 
$W_G$ and $W_\OO$ be equal, proving the first statement.

	The second statement follows directly from the first and~\cite[Lemma 2.2]{RR85}.
\end{proof}

Let us recall a construction of~\cite{JKS16}. Let $\Pi_n$ be the complete preprojective algebra of type 
$\widetilde{\mathrm A_{n-1}}$, with vertices $1,2,\dots,n$ around the cycle and 
arrows labeled $x_i: (i-1) \to i$ and $y_i: i\to (i-1)$. 
\begin{defin}[\cite{JKS16}]
	Let $B=B(k,n)$ be the quotient of $\Pi_n$ by the closure of the ideal generated by the 
relations $x^k-y^{n-k}$.
\end{defin}

This algebra gives rise to an additive categorification of Scott's cluster algebra structure of the 
coordinate ring of the affine cone over the Grassmannian variety of $k$-spaces in $\mathbb{C}^n$, 
by taking the category $\mathcal F_{k,n}$ of maximal Cohen-Macaulay modules over $B$,~\cite{JKS16}. 
Furthermore, every Postnikov diagram of type $(k,n)$ gives rise to a cluster-tilting object for this 
category,~\cite{BKM16} and from the boundary of its endomorphism algebra we recover the 
algebra $B$:

\begin{prop}[\cite{BKM16}]\label{prop:boundary-alg-P}
	Let $\mathcal P$ be a Grassmannian Postnikov diagram of type $(k,n)$. Then 
$B(\mathcal P)\cong B^{op}$.
\end{prop}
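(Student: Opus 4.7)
The plan is to construct an explicit algebra isomorphism $\Psi\colon B^{op} \to B(\mathcal P) = eA(\mathcal P)e$, where $e$ is the sum of the idempotents at the boundary vertices, by matching generators and verifying the defining relations of $\Pi_n$ and the Grassmannian quotient.

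First I would describe the boundary data. A Grassmannian Postnikov diagram of type $(k,n)$ has exactly $n$ boundary regions $R_1, \dots, R_n$, with $R_i$ sitting between the marked points $i$ and $i+1$ (indices mod $n$); these give the frozen vertices of $Q_{\mathcal P}$. At each marked point $i$ two strands meet, the outgoing $\gamma_i$ and the incoming $\gamma_{i-k}$, and this crossing produces two boundary arrows in $Q_{\mathcal P}$, which I would call $x_i\colon R_{i-1} \to R_i$ and $y_i\colon R_i \to R_{i-1}$. I define $\Psi$ on the generators of $\Pi_n$ by sending $x_i \mapsto x_i$ and $y_i \mapsto y_i$; passing to $B^{op}$ accounts for the right-to-left composition convention.

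Second, I would verify the mesh/preprojective relation $y_{i+1}x_{i+1} = x_i y_i$ at each vertex $R_i$. Both sides are length-two paths $R_i \to R_i$ through the two cyclic faces flanking $R_i$, and the equality follows by applying the cyclic derivative of $W_{\mathcal P}$ with respect to the interior arrow emanating from the crossing at marked point $i$ (or a short sequence of such derivatives if the cyclic regions are larger than triangles). Third, I would verify the key Grassmannian relation $x_{i+k}\cdots x_{i+1} = y_{i+1}\cdots y_{i+k}$ as elements of $e_{i+k}A(\mathcal P)e_i$. Geometrically the strand $\gamma_{i+1}$, which runs from $i+1$ to $\sigma(i+1) = i+k+1$, separates the disk into two pieces containing respectively $k-1$ and $n-k-1$ other marked points; the two products correspond to boundary paths along these two sides of $\gamma_{i+1}$, and dimer relations applied inductively at each interior crossing of $\gamma_{i+1}$ with other strands push one boundary path across to the other.

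Fourth, I would argue that $\Psi$ is an isomorphism. Surjectivity is again an application of the dimer relations: any path between boundary vertices can be homotoped, crossing by crossing, onto a concatenation of the $x_i$ and $y_i$. Injectivity is the most delicate point; I would deduce it either by comparing graded dimensions of $e_i B e_j$ and $e_i B(\mathcal P) e_j$ using the normal form $x^a y^b e_i$ (with $\min(a,n-b)<k$) for elements of $B$ and checking each such element maps to a nonzero class, or by invoking the Cohen--Macaulay categorification of~\cite{JKS16} to match structure. The main obstacle will be step three together with injectivity: rigorously showing that the relation $x^k = y^{n-k}$ emerges as a global consequence of the local dimer relations across an arbitrary Grassmannian diagram requires a careful inductive ``strand-pushing'' argument, and excluding extra relations in $B(\mathcal P)$ needs a clean normal-form or dimension count to complete the argument.
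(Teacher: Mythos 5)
This proposition is stated as a citation of~\cite{BKM16}; the paper does not reprove it, so there is no in-paper argument to compare against. Your outline is a from-scratch direct verification (match generators, check the preprojective and Grassmannian relations, exclude extra relations by a normal-form or dimension count). That is a genuinely different route than the one in~\cite{BKM16}, which obtains both the relations and the absence of extra relations at once from a structural fact: the dimer model of a Postnikov diagram is algebraically consistent, and consistency controls path equality between any pair of vertices via perfect-matching/winding data (together with independence of the boundary algebra under geometric exchange, which reduces the computation to a single convenient diagram). In your plan, steps (3) and (4) are exactly the places where that machinery does the real work, and they remain open in what you wrote.

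Two concrete problems in the sketch. The Grassmannian relation is misstated: $x^k e_i = y^{n-k} e_i$ equates a path of $k$ arrows $x_\bullet$ with a path of $n-k$ arrows $y_\bullet$, so the right-hand side should be the $(n-k)$-step product $y_{i+k+1}\cdots y_{i-1}y_i$, not $y_{i+1}\cdots y_{i+k}$ (that is $k$ arrows going the wrong way, so it is not even parallel to the left-hand side). And the geometric picture behind the strand-pushing does not set up the induction you want: the endpoints $R_i$ and $R_{i+k}$ lie on \emph{opposite} sides of $\gamma_{i+1}$, and the two boundary arcs from $R_i$ to $R_{i+k}$ together bound the entire disk, so deforming one arc to the other must sweep past essentially all strands, not just the interior crossings of $\gamma_{i+1}$. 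Until that global sweep is controlled --- which is exactly what consistency, or a reduction to a specific diagram via geometric exchange, is for --- step (3) and the injectivity in step (4) are genuine gaps, as you yourself acknowledge.
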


Recall that $n_0=n/d$. 
We define a quotient of $\Pi_{n_0}$ similarly as above. It will give us a basic Morita 
equivalent version of $B*G$. 

\begin{defin}
	Let $B_G = B_G(n_0, k, n)$ be the quotient of $\Pi_{n_0}$ by the ideal generated by the 
relations $x^k-y^{n-k}$.
\end{defin}

The group $G$ acts on $B$ by letting the generator act by the quiver automorphism rotating $i$ to 
$i+\on{GCD}(n,k)$. Denote this automorphism by $g$.

\begin{prop}\label{prop:iso-boundary}
	Let $\tilde{e} = e_1+\cdots +e_{n_0}$ be the idempotent in $B$ corresponding to the first $n_0$ vertices 
of $\Pi_n$. 
Then $\tilde{e}\otimes 1$ is a Morita idempotent in $B*G$, and there is an isomorphism 
$B_G\cong (\tilde{e}\otimes 1)B*G(\tilde{e}\otimes 1)$ mapping $e_i$ to 
$e_i\otimes 1$, $x_i$ and $y_i$ to 
$x_i\otimes 1$ and $y_i\otimes 1$ for $i\neq 1$, $x_1$ to $x_1\otimes g^{-1}$, and $y_1$ to 
$y_{n_0+1}\otimes g$.
\end{prop}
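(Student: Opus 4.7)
The plan is to construct an algebra homomorphism $\varphi\colon \Pi_{n_0}\to(e\otimes 1)(B*G)(e\otimes 1)$ using the formulas of the statement, verify that it descends to $B_G$, and then prove that it is an isomorphism. First, the hypothesis that $g$ has order $d$ forces $\gcd(n,k)=n_0$, so the generator acts on vertex idempotents of $B$ by $g(e_i)=e_{i+n_0}$. The computation
\[
\sum_{\ell=0}^{d-1}(1\otimes g^\ell)(e\otimes 1)(1\otimes g^{-\ell}) = \sum_\ell g^\ell(e)\otimes 1 = 1_B\otimes 1
\]
shows that $e\otimes 1$ is a full idempotent, so $(e\otimes 1)(B*G)(e\otimes 1)$ is Morita equivalent to $B*G$.

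Next, using the formula $(e_a\otimes 1)(c\otimes h)(e_b\otimes 1) = e_a c\, e_{h(b)}\otimes h$, I would verify that each prescribed generator lies in the correct corner of the idempotent subring; for instance, $x_1\otimes g^{-1}$ lies in $(e_1\otimes 1)(B*G)(e_{n_0}\otimes 1)$ because $g^{-1}(n_0)\equiv n\pmod n$, matching the arrow $n_0\to 1$ of $\Pi_{n_0}$, and similarly $y_{n_0+1}\otimes g$ lies in $(e_{n_0}\otimes 1)(B*G)(e_1\otimes 1)$. To see that $\varphi$ respects the preprojective mesh relations $x_iy_i=y_{i+1}x_{i+1}$ at each vertex $i\in[n_0]$: for $i\notin\{1,n_0\}$ both sides reduce to the corresponding relation in $B$ tensored with $1$; at $i=1$ one uses $g^{-1}(y_{n_0+1})=y_1$ to get $\varphi(x_1y_1) = x_1y_1\otimes 1 = y_2x_2\otimes 1 = \varphi(y_2x_2)$ via the mesh relation in $B$, and the case $i=n_0$ is analogous via $g(x_1)=x_{n_0+1}$.

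The key step is verifying that the relation $x^k=y^{n-k}$ descends. The path $x^k$ at a vertex $i$ in $\Pi_{n_0}$ has length $k=n_0 w_+$ and crosses the generator $x_1$ exactly $w_+$ times as it winds around the cycle; by the skew-product multiplication $(b\otimes h)(c\otimes h')=b\cdot h(c)\otimes hh'$ and induction on $w_+$, one computes $\varphi(x^k\text{ at }i)=(x^k\text{ at }i-k\text{ (mod }n)\text{ in }\Pi_n)\otimes g^{-w_+}$. A symmetric computation yields $\varphi(y^{n-k}\text{ at }i)=(y^{n-k}\text{ at }i-k\text{ (mod }n)\text{ in }\Pi_n)\otimes g^{d-w_+}$. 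Since $g^d=1$ and $x^k=y^{n-k}$ holds in $B$ at every vertex, these images coincide, so $\varphi$ descends to an algebra homomorphism $B_G\to(e\otimes 1)(B*G)(e\otimes 1)$. The careful bookkeeping of lifted vertex indices and accumulated $g$-shifts in this computation, and in particular the fact that $k=n_0w_+$ forces the exponent of $g$ to land correctly, is the main obstacle of the proof.

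Finally, to see $\varphi$ is an isomorphism: surjectivity follows by writing any basic element $c\otimes g^\ell$ of the idempotent subring (where $c$ is a path in $B$ from $b'+\ell n_0$ to $a$ with $a,b'\in[n_0]$) as an iterated product of images of generators of $\Pi_{n_0}$, where each excursion across the fundamental domain $\{1,\dots,n_0\}$ contributes one factor $x_1\otimes g^{-1}$ or $y_{n_0+1}\otimes g$, collectively accounting for the $g^\ell$-twist. Injectivity can be argued by comparing bases (paths in $\Pi_{n_0}$ modulo the relations of $B_G$, versus paths in $\Pi_n$ tensored with powers of $g$, which are linearly independent across different $g^\ell$ components), or more conceptually by invoking the general Morita equivalence theory for skew group algebras of quiver algebras with free vertex action as in \cite{RR85} and \cite{GP19}: the quotient of the quiver of $B$ by $G$ is exactly that of $\Pi_{n_0}$ and the image of the ideal of $B$ is $(x^k-y^{n-k})$, so the basic algebra Morita equivalent to $B*G$ is precisely $B_G$.
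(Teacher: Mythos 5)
Your proof is correct, but it takes a genuinely different route from the paper. The paper's proof is two sentences: the first assertion (that $e\otimes 1$ is full) is attributed to Reiten--Riedtmann since the first $n_0$ vertices form a cross-section of vertices of $Q_B$ under $G$, and the explicit isomorphism is obtained as ``a direct application of [GPP19, Lemma 4.6], observing that $y_{n_0+1}\otimes g = (1\otimes g)(y_1\otimes 1)$.'' That cited lemma is a general recipe for reading off a quiver presentation of an idempotent truncation of a skew group algebra when the action is free on the quiver and one has chosen a cross-section of vertices and arrows; in particular it already handles the descent of relations through the twist. What you do instead is reprove this from scratch in the special case at hand: you construct the homomorphism on $\Pi_{n_0}$, check it lands in the correct corners, verify the mesh relations (correctly using $g^{-1}(y_{n_0+1})=y_1$ and $g(x_1)=x_{n_0+1}$ at the two wraparound vertices), track the accumulated $g$-power as the loops $x^k$ and $y^{n-k}$ cross the fundamental domain $w_+$ and $d-w_+$ times respectively, and then argue surjectivity and injectivity directly. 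The bookkeeping that $\varphi(x^k)$ and $\varphi(y^{n-k})$ both end up as $(\,\cdot\,)\otimes g^{-w_+}$ with the same lifted $B$-path is indeed the crux, and your computation of it is right. Your approach is self-contained and makes the role of the winding number $w_+$ transparent; the paper's approach is much shorter and is really a corollary of a general skew-group machinery that it sets up anyway for Proposition~\ref{prop:sgas}. (One small remark: you phrase ``$g$ has order $d$ forces $\gcd(n,k)=n_0$'' as a hypothesis, but it is better viewed as the consistency condition built into the geometric setup, namely that the rotation by $2\pi/d$ acts on the boundary regions of $\mathcal P$ by shifting labels by $n_0$; this is what the paper is implicitly using when it writes $\gcd(n,k)$.)
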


\begin{proof}
	The first assertion follows from~\cite{RR85} since the first $n_0$ 
vertices form a cross-section of vertices of the quiver of $B$ under the action of $G$. 
The explicit isomorphism is a direct application 
of~\cite[Lemma~4.6]{GPP19}, observing that $y_{n_0+1}\otimes g = (1\otimes g)(y_1\otimes 1)$.
\end{proof}

From now on we will freely identify $B_G$ with $(\tilde{e}\otimes 1)B*G(\tilde{e}\otimes 1)$ 
using this isomorphism.

\begin{cor} \label{cor:boundary-alg-orbifold}
	Let $n=dn_0$, let $\OO$ be a Grassmannian orbifold diagram of order $d$ and of 
type $(k,n)$. Then
we have that $B(\OO)\cong (B_G)^{op}$. 
\end{cor}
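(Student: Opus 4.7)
The plan is to chain together the results already established in the section. By Proposition~\ref{prop:sgas} applied to the boundary algebra, we have a Morita equivalence $B(\OO)\sim B(\sym_d(\OO))*G$. Since $\sym_d(\OO)$ is a $d$-symmetric Grassmannian Postnikov diagram of type $(k,n)$, Proposition~\ref{prop:boundary-alg-P} gives an algebra isomorphism $B(\sym_d(\OO))\cong B^{op}$. Transporting the rotation $G$-action on $B(\sym_d(\OO))$ through this identification yields an action on $B^{op}$; one must check that this agrees with the $G$-action on $B$ described before Proposition~\ref{prop:iso-boundary} (rotation of vertex labels), but this is a direct verification on generators since both actions come from the same rotational symmetry of the quiver of $\Pi_n$.

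Next, I would invoke the general fact that for any $\C$-algebra $A$ equipped with a $G$-action by automorphisms, there is an algebra isomorphism
\[
A^{op}*G \;\xrightarrow{\ \sim\ }\; (A*G)^{op},\qquad a\otimes g\longmapsto g^{-1}(a)\otimes g^{-1}.
\]
This is a short bookkeeping check on both multiplication rules (left-hand side: $(a\otimes g)(b\otimes h)=g(b)\cdot a\otimes gh$; right-hand side: multiply in $(A*G)^{op}$ by reversing the order) and it works for any group $G$. Applying this with $A=B$, we conclude $B^{op}*G\cong (B*G)^{op}$, and consequently $B(\OO)\sim (B*G)^{op}$.

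Now Proposition~\ref{prop:iso-boundary} identifies $B_G$ with the corner $(e\otimes 1)(B*G)(e\otimes 1)$ for a full idempotent $e\otimes 1$, so $B_G\sim B*G$; taking opposites preserves Morita equivalence, hence $B_G^{op}\sim (B*G)^{op}$. Assembling the whole chain:
\[
B(\OO)\;\sim\; B(\sym_d(\OO))*G\;\cong\; B^{op}*G\;\cong\;(B*G)^{op}\;\sim\; B_G^{op}.
\]
To upgrade this Morita equivalence to an isomorphism, I would observe that both $B(\OO)$ and $B_G^{op}$ are basic: $B(\OO)=eA(\OO)e$ is cut out by an idempotent from the basic path algebra $A(\OO)$ (whose quiver has pairwise distinct vertices, one per alternating region of $\OO$), and $B_G$ is a quotient of the basic preprojective algebra $\Pi_{n_0}$. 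Two basic algebras that are Morita equivalent are isomorphic, giving $B(\OO)\cong B_G^{op}$.

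The main technical point will be the compatibility of $G$-actions across the identification $B(\sym_d(\OO))\cong B^{op}$: the generator of $G$ acts on ${\sym}_d(\OO)$ by rotation by $2\pi/d$, i.e.\ by the shift $i\mapsto i+n_0$ on boundary vertices, while Proposition~\ref{prop:iso-boundary} is phrased in terms of rotation by $\gcd(n,k)$; one needs to verify (or invoke the Grassmannian setup $k=n_0w_+$ with $\gcd(d,w_+)=1$) that these generate the same cyclic group of order $d$ of automorphisms of $B$, so that both skew-group algebras agree. Once this is confirmed, the rest of the argument is the formal chain above.
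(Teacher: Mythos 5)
Your proof is correct and follows essentially the same route as the paper's, which is the one-line chain $B(\OO)\sim B(\mathcal P)*G\sim B^{op}*G\sim (B_G)^{op}$ followed by the observation that both sides are basic. You add useful bookkeeping that the paper leaves implicit — the isomorphism $A^{op}*G\cong (A*G)^{op}$ and the compatibility of the two descriptions of the $G$-action (the paper's phrase ``rotating by $\gcd(n,k)$'' in fact means ``rotating by $n_0$'', which is what the $d$-fold rotational symmetry gives) — but the underlying argument is identical.
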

\begin{proof}
	We have $B(\OO)\sim B(\mathcal P)*G\sim B^{op}*G\sim (B_G)^{op}$, and both algebras are basic. 
\end{proof}

%
\subsection{Modules for the skew group algebra $B_G$}\label{ssec:modules-skew}

For the rest of the paper, we assume that $\OO$ is a Grassmannian orbifold diagram of type $(k,n)$ 
and of order $d$, see Definition~\ref{def:DefOrbiDiagram}. In particular, $k=n_0w_+$ for $w_+$ 
a common winding number of all strands. 
Thus its universal cover $\mathcal P = {\sym}_d(\OO)$ is a $d$-symmetric 
Grassmannian Postnikov diagram of type $(k,n)$. 
Our goal is to explain the relationship between the boundary algebras $B(\mathcal P)$ and $B(\OO)$ 
of the two diagrams. 
By the above results (Proposition~\ref{prop:boundary-alg-P} and 
Corollary~\ref{cor:boundary-alg-orbifold}), 
these algebras are isomorphic to (the opposites of) $B$ and $B_G$ respectively, 
independently of $\OO$ and its symmetrized version, 
so we will focus our attention on the algebras $B$ and $B_G$, 
for which we have a quiver description.

The algebra $B$ and its singularity category have been thoroughly studied, see for instance 
\cite{JKS16}, \cite{DL16}, \cite{BBGE19}. 
We are interested in carrying out a similar study for $B_G$.

The element $t=\sum_ix_iy_i$ is central in $B$ and in fact, $Z(B)= \C[[t]]$, \cite{JKS16}. 
Its image $(\tilde{e}\otimes 1)(t\otimes 1)(\tilde{e}\otimes 1)$ is a central element of $B_G$.  

We are interested in special $B$-modules, namely the rank one Cohen-Macaulay $B$-modules. They 
give rise to cluster-tilting objects in the category $\mathcal F_{k,n}$ of maximal Cohen-Macaulay 
modules over $B$. Furthermore, every object in $\mathcal F_{k,n}$ has a filtration by such modules. 
These modules are constructed as follows.
\begin{defin}\label{def:rk1}
	Let $I$ be a $k$-element subset of $\{1, \ldots, n\}$. Let $L_I$ be the $B$-module given as a representation by: 
	\begin{itemize}
		\item A copy of $Z(B)$ at every vertex. Call $\mathbf 1_i$ the identity of $Z(B)$ at vertex $i$. 
		\item The arrow $x_i$ maps $\mathbf{1}_{i-1}$ to $\mathbf{1}_i$ if $i\in I$, maps $\mathbf{1}_{i-1}$ to $t\mathbf{1}_i$ otherwise.
		\item The arrow $y_i$ maps $\mathbf{1}_i$ to $t\mathbf{1}_i$ if $i\in I$, maps $\mathbf{1}_i$ to $\mathbf{1}_{i-1}$ otherwise.
	\end{itemize}
\end{defin}
\begin{rmk}
	The module $L_I$ is free of rank $n$ over $Z(B)$. It is in fact Cohen-Macaulay of rank one, and all Cohen-Macaulay modules of rank one over $B$ are of this form for some $I$, by~\cite{JKS16}. 
\end{rmk}
\begin{rmk} 
	By construction, we have canonical isomorphisms $\tw{g}{L}_I= L_{I-n_0}$, where $\tw{g}{M}$ denotes the twist of $M$ by $g$ in $\on{mod}(B)$.
\end{rmk}

\begin{ex}
We recall how the modules $L_I$ can be visualised as lattice diagrams (\cite[Section 5]{JKS16}) by 
presenting $L_{127}$ for $\sym_3(\mathcal{O})$ of Example \ref{Ex:CentralFixedPoint} in 
Figure \ref{fig:ExModule127}.  
\end{ex}

\begin{figure}[ht]
	\includegraphics[scale=.65]{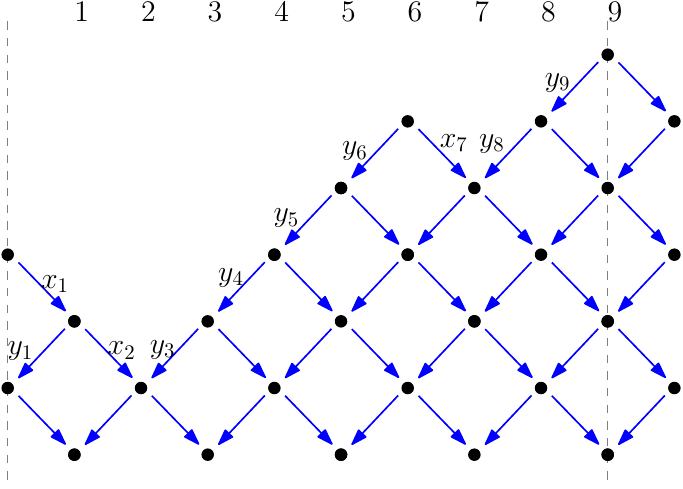} 
	\caption{The lattice diagram for the module $L_{127}$ for $n=9$.}
	\label{fig:ExModule127}
\end{figure}

We want to find analogous modules as the rank 1 modules from Definition~\ref{def:rk1} 
for the algebra $B_G$. 
Let us write $t$ for the element $(\tilde{e}\otimes 1)(t\otimes 1)(\tilde{e}\otimes 1)\in B_G$. 
Let $[I]_{n_0}$ be an equivalence class of $k$-element subsets of $\{1, \dots, n\}$ under the equivalence 
$\sim_{n_0}$ of $2^{\{1, \dots, n\}}$ (these are labels of regions of orbifold diagrams, as introduced in 
Section~\ref{sec:labels}).

\begin{defin}\label{Def:B_GModules}
	Let $I$ be a $k$-subset of $\{1,2,\dots, n\}$ and let $[I]_{n_0}$ be its equivalence class, 
for $n=dn_0$. 
We define a $B_G$-module as follows. As a vector space, we define
\[
L_{ [I]_{n_0}} = \bigoplus_{l=0}^{d-1}\bigoplus_{i=1}^{n_0} \C[[t]], 
\]
where we denote the identities of the above power series rings by $\mathbf{1}_i^l$. 
It is enough to describe the action of the elements $e_i, x_i, y_i\in B_G$ on the elements 
$\mathbf{1}_j^l$. Addition on the superscripts $l$ is always modulo $n_0$. 
	\begin{itemize}
		\item The element $e_i$ maps $\mathbf{1}_i^l$ to $\mathbf{1}_i^l$. 
		\item The arrow $x_1$ maps  $\mathbf{1}_{n_0}^{ l-1}$ to $\mathbf{1}_{1}^{ l}$ if 
		$1+ln_0\in I$, it maps $\mathbf{1}_{n_0}^{l-1}$ to $t\mathbf{1}_{1}^l$ if $1+ln_0\notin I$. 
		\item The arrow $y_1$ maps $\mathbf{1}_{1}^l$ to $\mathbf{1}_{n_0}^{l-1}$ if 
		$1+ln_0\notin I$, 
		it maps $\mathbf{1}_{1}^l$ to $t\mathbf{1}_{n_0}^{l-1}$ if $1+ln_0\in I$
	\end{itemize}
For $i=\{2,\dots, n_0\}$, the $x_i$ and $y_i$ act as follows: 
	\begin{itemize}
		\item 
		The arrow $x_i$, $i=\{2,\dots, n_0\}$, maps $\mathbf{1}_{i-1}^l$ to $\mathbf{1}_{i}^l$ if 
		$i+ln_0\in I$, it maps $\mathbf{1}_{i-1}^l$ to $t\mathbf{1}_{i}^l$ if $i+ln_0\notin I$. 
		\item 
		The arrow $y_i$ maps $\mathbf{1}_{i}^l$ to $\mathbf{1}_{i-1}^l$ if 
		$i+ln_0\notin I$, it maps $\mathbf{1}_{i}^l$ to $t\mathbf{1}_{i-1}^l$ if $i+ln_0\in I$. 
	\end{itemize}
\end{defin}

Observe that the definition of $L_{[I]_{n_0}}$ does indeed depend on the choice of 
$I\in [I]_{n_0}$, see Example~\ref{ex:module-BG} below. 
So this seems not well-defined at first sight. However, we will show in 
Lemma~\ref{lem:iso-representatives} that different choices 
of representatives for $[I]_{n_0}$ result in modules which are canonically isomorphic, 
cf. also Remarks~\ref{rem:iso-choice} and~\ref{rem:choice-I}. 

\begin{ex}\label{ex:module-BG}
We illustrate Definition~\ref{Def:B_GModules} on Example \ref{Ex:CentralFixedPoint}. 
Let $n_0=d=3$ and so $n=9$. The quiver with potential of the above example 
is described in Example \ref{Ex:QPCentralFixedPoint}. 
Recall that $[147]_{3}=\{\{1,4,7\}\}$ because the corresponding alternating region contains the 
orbifold point. All other equivalence classes contain three $3$-subsets of $9$. 
In Figure~\ref{fig:ExModules} we give the modules $L_{[I]_3}$ and $L_{[J]_3}$ for 
$I=\{1,2, 7\}$ and $J=\{1,4,5\}$ as lattice diagrams (similarly as for the rank 1 modules for 
$B$ in~\cite[Section 5]{JKS16}). 
Note that $[127]_3=[145]_3$.  
Figure~\ref{fig:ExModules-layers} gives the top part of the 
three different modules $L_{[127]_3}$, $L_{[145]_3}$ 
and $L_{[478]_3}$ as lattices on the three columns for the three vertices of the algebra 
$B_G$, with the layers $l=0,l=1,l=2$ in different colours. The vertices and arrows outside 
of the middle region are repeated as empty circles and dashed arrows to the left and right. 
\end{ex}

\begin{figure}[ht]
	\includegraphics[scale=.55]{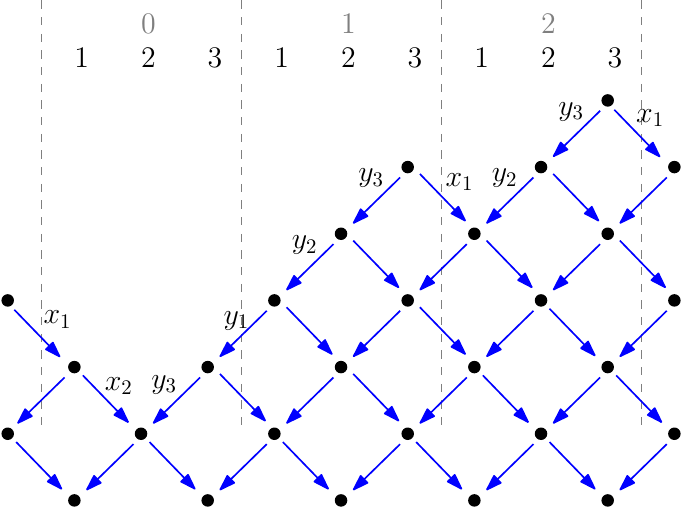}\hspace{1.5cm}
	\includegraphics[scale=.55]{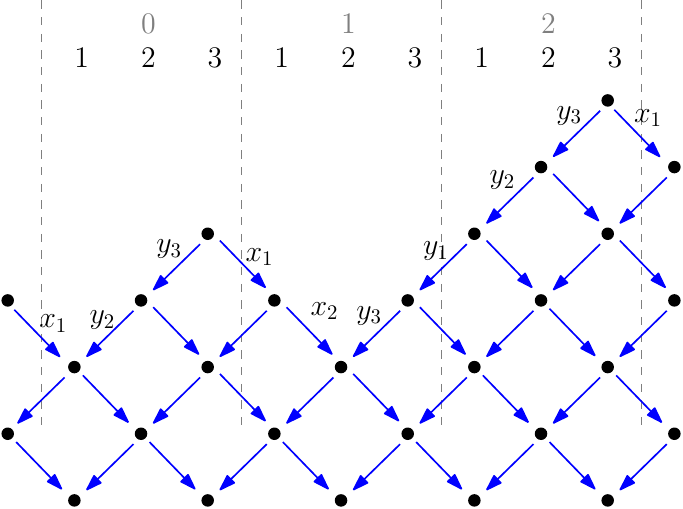}  
	\caption{On the left the module $L_{[127]_3}$ and on the right $L_{[145]_3}$.}
	\label{fig:ExModules}
\end{figure}

\begin{figure}[ht]
	\includegraphics[scale=.8]{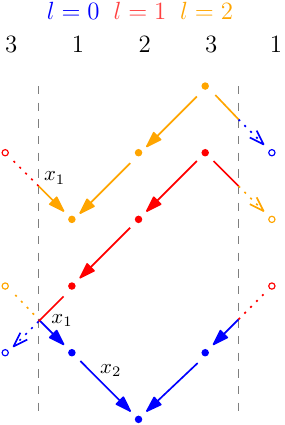}\hspace{.7cm}
	\includegraphics[scale=.8]{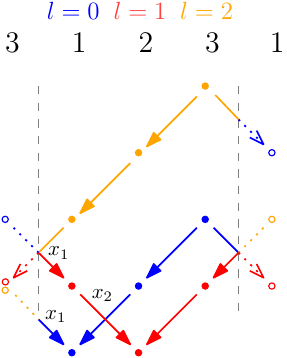}\hspace{.7cm}
	\includegraphics[scale=.8]{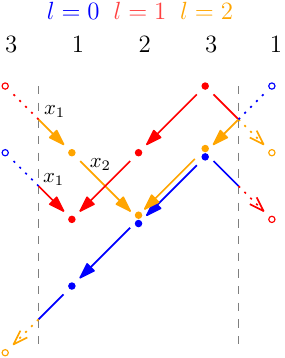}
	\caption{The modules $L_{[127]_3}$, $L_{[145]_3}$ and $L_{[478]_3}$ with 
	their layers coloured.}
	\label{fig:ExModules-layers}
\end{figure}

We define a $\C[[t]]$-linear map $\varphi$ from $L_{[I]_{n_0}}$ to $L_{[I+n_0]_{n_0}}$ 
where $I+n_0$ is 
the $k$-subset obtained from $I$ by adding $n_0$ to each element of $I$. The map 
$\varphi$ increases the label $l$ by 1 modulo $d$, i.e.~we set: 
$\varphi(\mathbf{1}_j^l)=\mathbf{1}_j^{l+1}$ for $l<d-1$ and $\varphi(\mathbf{1}_j^{d-1})=\mathbf{1}_j^{0}$. 

We claim that this map $\varphi$ is a $B_G$-module homomorphism: 

\begin{rmk}\label{Rem:labels}
Let $I$ be a $k$-subset of $\{1,2,\dots, n\}$ and $[I]_{n_0}$ its equivalence class for  $n=dn_0$.  Note that $i+(l+1)n_0\in I+n_0$ if and only if $i+ln_0\in I$.
\end{rmk}

\begin{lemma}\label{lem:iso-representatives}
Let $I$ be a $k$-subset of $\{1,2,\dots, n\}$ and $[I]_{n_0}$ its equivalence class for 
$n=dn_0$. 
The map $\varphi$ induces an isomorphism $L_{[I]_{n_0}}\cong L_{[I+n_0]_{n_0}}$ of 
$B_G$-modules. 
\end{lemma}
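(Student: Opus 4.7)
The plan is to verify directly that $\varphi$ intertwines the action of each generator of $B_G$ and is a linear bijection. Since the underlying vector spaces of $L_{[I]_{n_0}}$ and $L_{[I+n_0]_{n_0}}$ are both $\bigoplus_{l=0}^{d-1}\bigoplus_{i=1}^{n_0}\C[[t]]$, the map $\varphi$ sending $\mathbf{1}_j^l \mapsto \mathbf{1}_j^{l+1}$ (with $l+1$ reduced modulo $d$) is visibly a $\C[[t]]$-linear bijection, with inverse shifting $l$ by $-1$. So the content of the lemma is the compatibility with the $B_G$-action.

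First I would check compatibility with the idempotents $e_i$, which is immediate since $\varphi$ does not change the vertex label $j$. Then, for $i \in \{2,\ldots,n_0\}$, I would compare $\varphi(x_i \cdot \mathbf{1}_{i-1}^l)$ with $x_i \cdot \varphi(\mathbf{1}_{i-1}^l) = x_i \cdot \mathbf{1}_{i-1}^{l+1}$. By Definition~\ref{Def:B_GModules}, the left-hand side is $\mathbf{1}_i^{l+1}$ or $t\mathbf{1}_i^{l+1}$ according to whether $i+ln_0 \in I$; the right-hand side is $\mathbf{1}_i^{l+1}$ or $t\mathbf{1}_i^{l+1}$ according to whether $i+(l+1)n_0 \in I+n_0$. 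These two conditions coincide by Remark~\ref{Rem:labels}. The computation for $y_i$ is entirely analogous.

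The mildly tricky case, and the one I would treat most carefully, is the action of $x_1$ and $y_1$, where the superscript $l$ itself changes when the arrow is applied. Here I would check, for example, that $\varphi(x_1 \cdot \mathbf{1}_{n_0}^{l-1})$ equals $\mathbf{1}_1^{l+1}$ or $t\mathbf{1}_1^{l+1}$ depending on whether $1+ln_0 \in I$, while $x_1 \cdot \varphi(\mathbf{1}_{n_0}^{l-1}) = x_1 \cdot \mathbf{1}_{n_0}^l$ gives $\mathbf{1}_1^{l+1}$ or $t\mathbf{1}_1^{l+1}$ depending on whether $1+(l+1)n_0 \in I+n_0$. These conditions are again the same by Remark~\ref{Rem:labels}, and similarly for $y_1$. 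I do not anticipate a real obstacle; the only thing to be careful about is the bookkeeping of the superscript shift in the two special arrows $x_1, y_1$, together with using that the equivalence relation $\sim_{n_0}$ is precisely what makes the membership conditions in the two representatives $I$ and $I+n_0$ match after shifting $l$.
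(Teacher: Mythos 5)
Your proposal is correct and follows essentially the same route as the paper: a direct generator-by-generator check that $\varphi$ intertwines the $B_G$-action, with Remark~\ref{Rem:labels} doing the bookkeeping that translates membership in $I$ to membership in $I+n_0$ after the superscript shift, and bijectivity coming from $\varphi$ being a $\C[[t]]$-linear permutation of the free generators. The only small difference is presentational: your index bookkeeping in the $x_1$, $y_1$ cases is slightly cleaner than the paper's, but the substance is identical.
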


\begin{proof}
First, we check $\varphi$ is a homomorphism of $B_G$-modules. 
To ease the notation, let $J=I+n_0$, and so by Remark~\ref{Rem:labels}, 
$i+ln_0\in J$ if and only if $i+(l-1)n_0\in I$ for any $i\in \{1,\dots, n_0\}$, $l=0,\dots, d-1$. 

	We have:
	\begin{itemize}
		
		\item For every $i\in \{1, \dots, n_0\}$,
		\begin{align*}
		e_i \varphi({\mathbf 1}_j^l) 
		&= \begin{cases}
		\mathbf{1}_j^{l+1}& \text{if } i=j;\\
		0 & \text{otherwise}
		\end{cases}\\
		\varphi(e_i {\mathbf 1}_j^l) &= \begin{cases}
		\varphi(\mathbf{1}_j^l)=\mathbf{1}_j^{l+1}  & \text{if } i=j;\\
		\varphi(0)=0 & \text{if $i\ne j$.}
		\end{cases}
		\end{align*}
		
       	\item Since $x_1{\mathbf  1}_j^l=0$ unless $j=n_0$, it is enough 
	to consider  
	the effect of $x_1$ only on $\varphi({\mathbf 1}_{n_0}^{l-1})$: 
		\begin{align*}
		x_1\varphi({\mathbf 1}_{n_0}^{l-1})=x_1\mathbf{1}^{l}_{n_0}
		&= 
		\begin{cases}
		 \mathbf{1}^{l+1}_{1}& \text{if }  1+ln_0\in J\\
		t\mathbf{1}^{l+1}_{1}& \text{if }  1+ln_0\not\in J\\
		\end{cases}\\
		&= 
		\begin{cases}
		 \mathbf{1}^{l+1}_{1}& \text{if }  1+(l-1)n_0\in I\\
		t\mathbf{1}^{l+1}_{1}& \text{if }  1+(l-1)n_0\not\in I\\
		\end{cases}\\
		&= 	\varphi(x_1{\mathbf 1}_{n_0}^{l-1}),
		\end{align*}
		\item  and the effect of $y_1$ on $\varphi({\mathbf 1}_1^{l-1})$:
		\begin{align*}
		y_1\varphi({\mathbf 1}_{1}^{l-1})=y_1\mathbf{1}^{l}_{1}
		&= 
		\begin{cases}
		 \mathbf{1}^{l-1}_{n_0}& \text{if }  1+ln_0\notin J\\
		t\mathbf{1}^{l-1}_{n_0}& \text{if }  1+ln_0\in J\\
		\end{cases}\\
		&= 
		\begin{cases}
		 \mathbf{1}^{l-1}_{n_0}&  \text{ if } i+(l-1)n_0\notin I\\
		t\mathbf{1}^{l-1}_{n_0}&  \text{ if } i+(l-1)n_0\in I\\
		\end{cases}\\
		&= 	\varphi(y_1{\mathbf 1}_1^{l-1}).
		\end{align*}	
		
		\item For $i\in \{2, \dots, n_0\}$,  the effect of $x_i$ on 
		$\varphi({\mathbf 1}_{i-1}^l)$ is:
		\begin{align*}
		x_i\varphi({\mathbf 1}_{i-1}^l)=x_i\mathbf{1}^{l+1}_{i-1}
		&= 
		\begin{cases}
		 \mathbf{1}^{l+1}_{i}& \text{if }   i+(l+1)n_0\in J\\
		t\mathbf{1}^{l+1}_{i}& \text{if }  i+(l+1)n_0\not\in J\\
		\end{cases}\\
		&= 
		\begin{cases}
		 \mathbf{1}^{l+1}_{i}& \text{if }  i+ln_0\in I\\
		t\mathbf{1}^{l+1}_{i}& \text{if }  i+ln_0\not\in I\\
		\end{cases}\\
		&= 	\varphi(x_i{\mathbf 1}_{i-1}^l).
		\end{align*}
		\item For $i\in \{2, \dots, n_0\}$,  the effect of $y_i$ on 
		$\varphi({\mathbf 1}_{i}^l)$ is 
		\begin{align*}
		y_i\varphi({\mathbf 1}_{i}^l)=y_i\mathbf{1}^{l+1}_{i}
		&= 
		\begin{cases}
		 \mathbf{1}^{l+1}_{i-1}& \text{if }   i+(l+1)n_0\notin J\\
		t\mathbf{1}^{l+1}_{i-1}& \text{if }   i+(l+1)n_0\in J\\
		\end{cases}\\
		&= 
		\begin{cases}
		 \mathbf{1}^{l+1}_{i-1}& \text{if }   i+ln_0\notin I\\
		t\mathbf{1}^{l+1}_{i-1}& \text{if }  i+ln_0\in I\\
		\end{cases}\\
		&= 	\varphi(y_i{\mathbf 1}_i^l).
		\end{align*}		
	\end{itemize}

For the bijectivity, we note that $\varphi$ permutes the generators of the modules 
and that the generators freely generate the modules over the centre. 
\end{proof}

\begin{rmk}\label{rem:iso-choice}
By Lemma~\ref{lem:iso-representatives}, the module $L_{[I]_{n_0}}$ is well defined. 
\end{rmk}

\begin{rmk}
	As we have mentioned, from the definition it is clear that $x_iy_i$ maps $\mathbf{1}_i^l$ 
to $t\mathbf{1}_i^l$, so calling all the variables in the power series rings in 
Definition~\ref{Def:B_GModules} $t$ is justified. 
\end{rmk}

We want to relate the $B_G$-modules $L_{[I]_{n_0}}$ to the $B$-modules $L_I$. For this, we need to introduce 
some notation. 
The map $B\to B*G$ given by $b\mapsto b\otimes 1$ induces a functor $F = (B*G)\otimes_B -$ from 
$\on{mod}(B)$ to $\on{mod}(B*G)$. There is an equivalence 
$j^*:\on{mod}(B*G)\to \on{mod}(B_G)$ given by 
\[
j^* = (\tilde{e}\otimes 1)B*G\otimes_{B*G}-
\]
using the isomorphism of Proposition~\ref{prop:iso-boundary} where $\tilde{e}=e_1+\cdots + e_{n_0}$ 
is the idempotent of the first $n_0$ vertices of $\Pi_n$. 

We aim to prove that $L_{[I]_{n_0}}\cong j^*F(L_I)$. Let us do some preparation. 
As a $B$-module, $(B*G)\otimes_B L_I$ is generated by the elements $(1\otimes g^l)\otimes_B \mathbf 1_h$. This in turn implies that the $B_G$-module $j^*F(L_I)$ is generated by elements of the form $(e_i\otimes g^l)\otimes_B \mathbf{1}_h$. However, these elements are nonzero (if and) only if $h= g^{-l}(i)$. We are left with considering the elements $(e_i\otimes g^l)\otimes_B \mathbf{1}_{g^{-l}(i)}$, for $i\in \{1, \dots, n_0\}$ and $l\in \{0, \dots, d-1\}$. Observe that $t$ (which again we use as notation for $(\tilde{e}\otimes 1)(t\otimes 1)(\tilde{e}\otimes 1)$ as well as for $\sum_{i=1}^{n_0}x_iy_i$ in the quiver description of $B_G$) is in the center of $B_G$, so that as a $\C[[t]]$-module we have a decomposition $j^*F(L_I) = \bigoplus_{l=0}^{d-1}\bigoplus_{i=1}^{n_0} \C[[t]]$. It is therefore natural to define a map of $\C[[t]]$-modules $\psi:  L_{[I]_{n_0}}\to j^*F(L_I)$ by setting 
\[ 
\psi( \mathbf{1}_i^l) = (e_i\otimes g^{-l})\otimes_B \mathbf{1}_{g^{l}(i)}.
\]

\begin{lemma}\label{lem:L_I}
	The map $\psi:L_{[I]_{n_0}}\to  j^*F(L_I) $ is an isomorphism of $B_G$-modules.
\end{lemma}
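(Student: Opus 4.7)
The plan is to verify that $\psi$ is a $B_G$-linear bijection, splitting the argument into (i) checking that $\psi$ intertwines the actions of the generators of $B_G$, and (ii) checking that $\psi$ is a $\CC[[t]]$-linear bijection on the nose.

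For step (i), I will use the isomorphism of Proposition~\ref{prop:iso-boundary} to identify the generators $e_i, x_i, y_i$ of $B_G$ with elements of $(e\otimes 1)(B*G)(e\otimes 1)$, namely $e_i\otimes 1$, $x_i\otimes 1$, $y_i\otimes 1$ for $i=2,\dots,n_0$, and the twisted generators $x_1 \otimes g^{-1}$ and $y_{n_0+1}\otimes g$ for $i=1$. The main computational tool is the identity
\[
(b\otimes g^{-l})\otimes_B m \;=\; (e_i\otimes g^{-l})\otimes_B g^l(b)\cdot m
\qquad (b\in e_iB,\ m\in L_I),
\]
which follows from the skew-product rule $(s\otimes h)(a\otimes 1)=sh(a)\otimes h$ and the fact that $\otimes_B$ is balanced over the inclusion $B\hookrightarrow B*G$, $b\mapsto b\otimes 1$. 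With this I compute, for $i\ge 2$,
\[
(x_i\otimes 1)\cdot\psi(\mathbf 1_{i-1}^l)
=(x_i\otimes g^{-l})\otimes_B \mathbf 1_{(i-1)+ln_0}
=(e_i\otimes g^{-l})\otimes_B x_{i+ln_0}\mathbf 1_{(i-1)+ln_0},
\]
since $g^l(x_i)=x_{i+ln_0}$ in $\Pi_n$. By Definition~\ref{def:rk1}, $x_{i+ln_0}\mathbf 1_{(i-1)+ln_0}$ equals $\mathbf 1_{i+ln_0}$ or $t\mathbf 1_{i+ln_0}$ according to whether $i+ln_0\in I$ or not, which exactly matches $\psi(x_i\mathbf 1_{i-1}^l)$ from Definition~\ref{Def:B_GModules}. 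The analogous check for $y_i\otimes 1$ with $i\ge 2$ is symmetric.

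For the twisted generators, the calculation is the same up to a careful tracking of the $G$-action on the cyclic quiver of $B$ (which shifts vertex indices by $n_0$). For $x_1$, the identity $g^{-1}(e_{n_0})=e_n$ and $g^l(x_1)=x_{1+ln_0}$ yield
\[
(x_1\otimes g^{-1})\cdot\psi(\mathbf 1_{n_0}^{l-1})
=(e_1\otimes g^{-l})\otimes_B x_{1+ln_0}\mathbf 1_{ln_0},
\]
and similarly for $y_{n_0+1}\otimes g$ using $g^{l-1}(y_{n_0+1})=y_{1+ln_0}$; both match the formulas in Definition~\ref{Def:B_GModules}. The main obstacle here is simply bookkeeping: making sure the indices shift the right way in the skew product and that Proposition~\ref{prop:iso-boundary} is applied on the correct side (recall we compose from right to left).

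For step (ii), both sides are $\CC[[t]]$-modules via the central element $t$, and $\psi$ is automatically $\CC[[t]]$-linear. The module $L_{[I]_{n_0}}$ is by construction free of rank $n_0 d$ on the basis $\{\mathbf 1_i^l\}$. On the other side, $F(L_I)=(B*G)\otimes_B L_I=\bigoplus_{g\in G}(1\otimes g)\otimes_B L_I$ is free of rank $nd$, and multiplication by $e_i$ for $i\in\{1,\dots,n_0\}$ cuts out the free $\CC[[t]]$-summand with basis $\{(e_i\otimes g^{-l})\otimes_B\mathbf 1_{g^l(i)}\}_{l=0,\dots,d-1}$, so $j^*F(L_I)$ is free of rank $n_0 d$ with basis precisely the image of $\psi$. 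Hence $\psi$ is bijective, and combined with (i) it is the desired isomorphism of $B_G$-modules.
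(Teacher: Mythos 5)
Your proof is correct and follows essentially the same strategy as the paper's: verify bijectivity by matching free $\C[[t]]$-generators on both sides, and verify $B_G$-linearity by checking the action of the generators $e_i, x_i, y_i$ against the definitions, using the skew-product identity $(b\otimes g^{-l})\otimes_B m = (e_i\otimes g^{-l})\otimes_B g^l(b)\cdot m$ to pull the arrow across the tensor (the paper performs the same manipulation inline rather than isolating it as a standalone identity). Your reindexing for the twisted generators (using superscript $l-1$ versus the paper's $l$) is an immaterial shift of the free variable, and your more explicit basis-counting argument for bijectivity is equivalent to the paper's one-line observation that $\psi$ permutes free generators.
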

\begin{proof}
The strategy is the same as in the proof of Lemma~\ref{lem:iso-representatives}. 
First of all, $\psi$ permutes the free generators of the corresponding modules, giving 
the bijection. 

To see that $\psi$ is a $B_G$-module homomorphism, we check that $e_i, x_i, y_i$ 
in $B_G$ act on $\mathbf{1}_j^l$ in the same way as the corresponding elements in 
$(\tilde{e}\otimes 1)(B*G)(\tilde{e}\otimes 1)$ act on $(e_j\otimes g^{-l})\otimes_B \mathbf{1}_{g^{l}(j)}$ 
by left multiplication. Note that $g^l(j)=j+ln_0$. 

As before, for the action of $x_i$, we will restrict to $j=i-1$, 
for the action of $y_i$ to $j=i$. 
%
	
	We have:
	\begin{itemize}
		
		\item For every $i\in \{1, \dots, n_0\}$, 
		\begin{align*}
		(e_i\otimes 1)\psi( \mathbf{1}_j^l)&= 	
		(e_i\otimes 1)(e_j\otimes g^{-l})\otimes_B \mathbf{1}_{g^{l}(j)}\\
		& = (e_ie_j\otimes g^{-l})\otimes_B \mathbf{1}_{g^l(j)} \\
		&= \begin{cases}
		(e_i\otimes g^{-l})\otimes_B \mathbf{1}_{g^{l}(i)}& \text{if } i=j\\
		0 & \text{otherwise}
		\end{cases}\\
		&= \begin{cases}
		\psi( \mathbf{1}_i^l) & \text{if } i=j;\\
		0 & \text{otherwise}
		\end{cases}\\
		&= \psi(e_i  \mathbf{1}_j^l).
		\end{align*}
		
		\item For $i\in \{2, \dots, n_0\}$ (and $j=i-1$), 
		\begin{align*}
		(x_i\otimes 1)\psi( \mathbf{1}_{i-1}^l) &= 
		(x_i\otimes 1)(e_{i-1}\otimes g^{-l})\otimes_B \mathbf{1}_{g^{l}(i-1)} \\
		&= 	(x_i\otimes g^{-l})\otimes_B \mathbf{1}_{(i-1)+ln_0}  \\
		&= (e_i\otimes g^{-l})(g^{l}(x_i)\otimes 1)\otimes_B \mathbf{1}_{(i-1)+ln_0} 
		  \mbox{(using Definition~\ref{def:skew-group})}\\
		& \mbox{now $g^l(x_i)\otimes 1=x_{i+ln_0}\otimes 1$ 
		is in $B$ and we can pull it across $\otimes_B$ 
		to get} \\
		&=  (e_i\otimes g^{-l})\otimes_B
		\begin{cases}
		\mathbf{1}_{i+ln_0} & \text{if }  i+ln_0\in I;\\
		t \mathbf{1}_{i+ln_0} & \text{if } i+ln_0 \not\in I 
		\end{cases}\\
		&= 
		\begin{cases}
		\psi(\mathbf{1}_i^l) & \text{if }  i+ln_0\in I;\\
		\psi(t \mathbf{1}_i^l) & \text{if } i+ln_0 \not\in I 
		\end{cases}\\
		&= 	\psi(x_{i} \mathbf{1}_{i-1}^l).
		\end{align*}
		\item For $i\in \{2, \dots, n_0\}$ (and $j=i$), 
		\begin{align*}
		(y_i\otimes 1)\psi( \mathbf{1}_i^l) &= 
		(y_i\otimes 1)(e_i\otimes g^{-l})\otimes_B \mathbf{1}_{g^{l}(i)} 
		= (y_{i}\otimes g^{-l}) \otimes_B \mathbf{1}_{i+ln_0} \\
		&= (e_{i-1}\otimes g^{-l})(g^l(y_i)\otimes 1)\otimes_B\mathbf{1}_{i+ln_0} \\
		& = (e_{i-1}\otimes g^{-l})\otimes_B 
		\begin{cases}
		\mathbf{1}_{(i-1)+ln_0}& \text{if }  i+ln_0 \not\in I\\
		t\mathbf{1}_{(i-1)+ln_0}& \text{if } i+ln_0 \in I\\
		\end{cases}\\
		&= 
		\begin{cases}
		\psi( \mathbf{1}_{i-1}^l) & \text{if }   i+ln_0\not\in I\\
		\psi(t \mathbf{1}_{i-1}^l) & \text{if }   i+ln_0 \in I \\
		\end{cases}\\
		&= 	\psi(y_i \mathbf{1}_i^l).
		\end{align*}
		\item Finally, for $i=1$ and $j=n_0$, recalling that $x_1\in B_G$ maps to 
		$x_1\otimes g^{-1}\in (\tilde{e}\otimes 1)B*G(\tilde{e}\otimes 1)$ via the isomorphism of Proposition~\ref{prop:iso-boundary},  
		\begin{align*}
		(x_1\otimes g^{-1})\psi( \mathbf{1}_{n_0}^l) &= 
		(x_1\otimes g^{-1})(e_{n_0}\otimes g^{-l})\otimes_B \mathbf{1}_{g^{l}(n_0)} 
		= x_1\otimes g^{-l-1}\otimes_B \mathbf{1}_{(l+1)n_0} \\
		& = (e_1\otimes g^{-l-1})(g^{l+1}(x_1)\otimes 1)\otimes_B\mathbf{1}_{(l+1)n_0}  \\
		&= (e_1\otimes g^{-l-1})\otimes_B x_{1+(l+1)n_0} \mathbf{1}_{(l+1)n_0} \\
		&= (e_1\otimes g^{-l-1})\otimes_B 
		\begin{cases}
		\mathbf{1}_{g^{l+1}(1)}& \text{if } 1+(l+1)n_0\in I\\
		t\mathbf{1}_{g^{l+1}(1)}& \text{if } 1+(l+1)n_0\not\in I 
		\end{cases}\\
		&= \begin{cases}
		\psi( \mathbf{1}_1^{l+1}) & \text{if } 1+(l+1)n_0\in I\\
		\psi(t \mathbf{1}_1^{l+1}) & \text{if } 1+(l+1)ln_0 \not\in I
		\end{cases}\\
		&= 
		\psi(x_1 \mathbf{1}_{n_0}^l)
		\end{align*}
		\item and to check $y_1$, we only consider $j=1$, recalling that 
		$y_1$ maps to $y_{n_0+1}\otimes g\in (\tilde{e}\otimes 1)B*G(\tilde{e}\otimes 1)$ via the isomorphism of Proposition~\ref{prop:iso-boundary},
		\begin{align*}
		(y_{n_0+1}\otimes g)\psi( \mathbf{1}_1^l) &= 
		(y_{n_0+1}\otimes g)(e_1\otimes g^{-l})\otimes_B \mathbf{1}_{g^{l}(1)} 
		=( y_{n_0+1}\otimes g^{-l+1})\otimes_B \mathbf{1}_{1+ln_0}\\
		&= (e_{n_0}\otimes g^{-l+1})(g^{l-1}(y_{n_0+1})\otimes 1)\otimes_B \mathbf{1}_{1+ln_0}  \\ 
		& = (e_{n_0}\otimes g^{-l+1}) \otimes_B y_{1+ln_0} \mathbf{1}_{1+ln_0} \\
		&= (e_{n_0}\otimes g^{-l+1}) \otimes_B 
		\begin{cases}
		\mathbf{1}_{g^{l-1}(n_0)}& \text{if } 1+ln_0 \not\in I\\
		t\mathbf{1}_{g^{l-1}(n_0)}& \text{if } 1+ln_0\in I
		\end{cases}\\
		&= \begin{cases}
		\psi( \mathbf{1}_{n_0}^{l-1}) & \text{if } 1+ln_0 \not\in I\\
		\psi(t \mathbf{1}_{n_0}^{l-1}) & \text{if }  1+ln_0 \in I\\
		\end{cases}\\
		&= 
		\psi(y_1 \mathbf{1}_1^l).
		\end{align*}		
	\end{itemize}
\end{proof}

\begin{rmk}\label{rem:choice-I}
	Since $F(M) = F(\tw{g}{M})$ for any $M\in \on{mod}(B)$, we recover 
	that the modules $L_{[I]_{n_0}}$ are well defined (cf. Remark~\ref{rem:iso-choice}). 
\end{rmk}	

\begin{lemma}\label{lem:basic}
	If $[I]_{n_0}\neq [J]_{n_0}$ then $L_{[I]_{n_0}}\not\cong L_{[J]_{n_0}}$.
\end{lemma}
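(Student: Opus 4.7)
The plan is to reduce the statement to the known fact (from~\cite{JKS16}) that the modules $L_I$ are pairwise non-isomorphic as $B$-modules for distinct $k$-subsets $I \subseteq \{1, \dots, n\}$.

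First, I would use Lemma~\ref{lem:L_I}, which identifies $L_{[I]_{n_0}}$ with $j^*F(L_I)$ and $L_{[J]_{n_0}}$ with $j^*F(L_J)$. Since $e \otimes 1$ is a Morita idempotent in $B*G$ (Proposition~\ref{prop:iso-boundary}), the functor $j^* = (e \otimes 1)(B*G) \otimes_{B*G} -$ is an equivalence between $\on{mod}(B*G)$ and $\on{mod}(B_G)$. Therefore an isomorphism $L_{[I]_{n_0}} \cong L_{[J]_{n_0}}$ of $B_G$-modules lifts to an isomorphism $F(L_I) \cong F(L_J)$ in $\on{mod}(B*G)$.

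Next, I would restrict along the inclusion $B \hookrightarrow B*G$, $b \mapsto b \otimes 1$; equivalently, I would pass through the equivalence $E:\on{mod}(B*G)\to\on{mod}(B)^G$ and then forget the equivariant structure. As recalled just before the technical lemma, for any $L \in \on{mod}(B)$ one has
\[
EF(L) \;\cong\; \bigoplus_{g \in G} \tw{g}{L}
\]
as an object of $\on{mod}(B)^G$, and in particular as a $B$-module it is the direct sum of the twists of $L$. Applying this to $L_I$ and $L_J$ and using the canonical identification $\tw{g^l}{L_I} = L_{I - ln_0}$ from the remark following Definition~\ref{def:rk1}, the isomorphism $F(L_I) \cong F(L_J)$ produces a $B$-module isomorphism
\[
\bigoplus_{l=0}^{d-1} L_{I - ln_0} \;\cong\; \bigoplus_{l=0}^{d-1} L_{J - ln_0}.
\]

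Finally, I would invoke Krull--Schmidt together with the result of~\cite{JKS16} stating that the rank one Cohen--Macaulay modules $L_{I'}$ for distinct $k$-subsets $I'$ are pairwise non-isomorphic indecomposables. This forces the multisets $\{I - ln_0 : 0 \le l < d\}$ and $\{J - ln_0 : 0 \le l < d\}$ to coincide, and hence the underlying sets (which are exactly the equivalence classes $[I]_{n_0}$ and $[J]_{n_0}$) are equal, contradicting the assumption. The one point worth checking is the case when the class $[I]_{n_0}$ has fewer than $d$ elements (the orbifold point sitting in an alternating region, as in Remark~\ref{rmk:central_region}): here some summands in the decomposition above repeat, but the multiset equality still recovers the set equality of equivalence classes, so the argument goes through uniformly. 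I expect this last bookkeeping about multiplicities in the orbit to be the only mild subtlety.
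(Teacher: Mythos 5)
Your proof is correct and follows essentially the same route as the paper, which is identical through the step $F(L_I)\cong F(L_J)$. Where the paper simply asserts that this implies $L_I\cong\tw{g}{L}_J$ for some $g\in G$, you supply the justification by restricting along $B\hookrightarrow B*G$ via $E$, expanding $EF(L)\cong\bigoplus_{g\in G}\tw{g}{L}$, and invoking Krull--Schmidt together with indecomposability and pairwise non-isomorphism of the rank one modules from~\cite{JKS16}; you also correctly flag and dispatch the only subtlety (the short $G$-orbit corresponding to the central region). So this is the same argument with the one unexplained implication in the paper's proof filled in.
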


\begin{proof}
	Assume that $L_{[I]_{n_0}}\cong L_{[J]_{n_0}}$. By Lemma~\ref{lem:L_I}, we have that $j^*F(L_I)\cong j^*F(L_J)$ hence $F(L_I)\cong F(L_J)$. This implies that $L_I\cong \tw{g}{L}_J$ for some $g\in G$. We conclude that $I$ and $J$ differ by a multiple of $n_0$ and so $[I]_{n_0}= [J]_{n_0}$.
\end{proof}

\begin{cor}
Let $I$ and $J$ be $k$-subsets of $n=dn_0$. Then $L_{[I]_{n_0}}\cong L_{[J]_{n_0}}$ if and only if 
$[I]_{n_0}= [J]_{n_0}$. 
\end{cor}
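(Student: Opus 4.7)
The plan is to obtain this corollary essentially for free by combining the two preceding lemmas, which already contain all the real work.

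For the ``only if'' direction, I would simply invoke Lemma~\ref{lem:basic}: its statement is precisely the contrapositive of what is required, so no further argument is needed.

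For the ``if'' direction, suppose $[I]_{n_0}=[J]_{n_0}$. By the definition of the equivalence relation $\sim_{n_0}$, this means there exists an integer $j\in\{0,1,\dots,d-1\}$ such that $J = I+jn_0$ (pointwise addition modulo $n$). I would then iterate Lemma~\ref{lem:iso-representatives} $j$ times to obtain a composite isomorphism of $B_G$-modules
\[
L_{[I]_{n_0}}\xrightarrow{\varphi} L_{[I+n_0]_{n_0}}\xrightarrow{\varphi}\cdots \xrightarrow{\varphi} L_{[I+jn_0]_{n_0}} = L_{[J]_{n_0}}.
\]
Alternatively, and perhaps more cleanly, one can appeal directly to Lemma~\ref{lem:L_I}: the modules $L_{[I]_{n_0}}$ and $L_{[J]_{n_0}}$ are respectively isomorphic to $j^*F(L_I)$ and $j^*F(L_J)$, and since $L_J = L_{I+jn_0} \cong \tw{g^{-j}}{L_I}$, the induction functor $F$ sends both to isomorphic $(B*G)$-modules (as $F(M)\cong F(\tw{g}{M})$ for any $g\in G$), hence so does $j^*F$.

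The only potential subtlety lies in confirming the compatibility between the two representations of the equivalence class, but this is essentially bookkeeping: the equivalence relation $\sim_{n_0}$ was defined precisely so that two $k$-subsets are equivalent when they differ by a multiple of $n_0$, and the lemmas were formulated to match. There is no genuine obstacle; the corollary is a clean summary of what has been established.
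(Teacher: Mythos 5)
Your proposal is correct and matches what the paper intends: the paper gives no explicit proof for this corollary, since it is an immediate consequence of Lemma~\ref{lem:basic} (for the ``only if'' direction) and Lemma~\ref{lem:iso-representatives}, or equivalently Lemma~\ref{lem:L_I} together with Remark~\ref{rem:choice-I} (for the ``if'' direction). Both routes you sketch for the ``if'' direction are sound and are exactly the observations the paper has already set up.
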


%
\subsection{The algebra $A(\mathcal O)$ as endomorphism algebra}

Let now $T_\mathcal P$ be the $B$-module defined by 
\[
T_\mathcal P= \bigoplus_{I\in \mathcal I} L_I.
\]
Since $\mathcal P$ is $d$-symmetric, it is invariant under rotation by $n_0$ steps. It follows 
that $T_\mathcal P= \tw{g}{T}_\mathcal P$, since $L_{I-n_0} = \tw{g}{L}_I$. 
As the labels 
correspond to regions on the disk, they either come in orbits of length $d$ 
(i.e.~are acted upon freely by $G$) or are fixed, and there can be at most one fixed label 
(the label of the central region, if it is alternating). 
\begin{defin}
Let $\OO$ be an orbifold diagram on a disk with $n_0$ points. 
	Let $T_\OO$ be the $B_G$-module defined by 
	\[
	T_\OO = \bigoplus_{[I]_{n_0} \in \mathcal I_\OO} L_{[I]_{n_0}}.
	\]
\end{defin}

We need some more notation. Let $T_0 = L_I$ for the label $I$ of the central region of $\mathcal P$, if it is alternating, and $T_0=0$ otherwise. The group $G$ acts freely on $T_\mathcal P\setminus T_0$, and we call $T_\mathcal P'$ a chosen cross-section of this action. Finally, we call $T_\mathcal P^{red} = T_0\oplus T_\mathcal P'$.

\begin{lemma}\label{lem:T}
	We have $T_\OO = j^*F(T_\mathcal P^{red})$.
\end{lemma}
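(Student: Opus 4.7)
The plan is to decompose $T_\mathcal P^{red}$, apply the additive functor $j^*F$ termwise, invoke Lemma~\ref{lem:L_I} on each summand, and then match up the indexing sets. Concretely, I would write
\[
T_\mathcal P^{red} = T_0 \oplus T_\mathcal P' = T_0 \oplus \bigoplus_{I \in \mathcal S} L_I,
\]
where $\mathcal S \subseteq \mathcal I$ is the chosen cross-section for the (free) $G$-action on $\mathcal I \setminus \{I_0\}$ (with $I_0$ the label of the central alternating region if present, and $T_0 = L_{I_0}$ or $0$ otherwise). Since $j^*F$ is additive, we obtain
\[
j^*F(T_\mathcal P^{red}) = j^*F(T_0) \oplus \bigoplus_{I \in \mathcal S} j^*F(L_I).
\]

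Next I would apply Lemma~\ref{lem:L_I} to each summand to identify $j^*F(L_I) \cong L_{[I]_{n_0}}$, so that
\[
j^*F(T_\mathcal P^{red}) \cong \bigl(\text{summand for } I_0 \text{ if present}\bigr) \oplus \bigoplus_{I \in \mathcal S} L_{[I]_{n_0}}.
\]
The remaining step is a bookkeeping check: the set $\{[I]_{n_0} : I \in \mathcal S\} \cup \{[I_0]_{n_0}\}$ equals $\mathcal I_\OO$, with no repetitions. This follows from the fact that $\mathcal I_\OO = \mathcal I / \sim_{n_0}$ together with Remark~\ref{rmk:central_region}: the equivalence classes of size $d$ correspond to $G$-orbits in $\mathcal I \setminus \{I_0\}$, each represented exactly once by an element of $\mathcal S$, while $[I_0]_{n_0} = \{I_0\}$ is a size-$1$ class contributed by $T_0$ (which is zero precisely when there is no fixed label). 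Hence
\[
j^*F(T_\mathcal P^{red}) \cong \bigoplus_{[I]_{n_0} \in \mathcal I_\OO} L_{[I]_{n_0}} = T_\OO.
\]

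There is essentially no obstacle: the statement is really a combinatorial repackaging of Lemma~\ref{lem:L_I}. The only subtle point is ensuring that different choices of representatives $I \in [I]_{n_0}$ in $\mathcal S$ really do give the same module $L_{[I]_{n_0}}$, but this is exactly Lemma~\ref{lem:iso-representatives}, so the definition of $T_\OO$ is independent of the choice of cross-section $T_\mathcal P'$, matching the right-hand side.
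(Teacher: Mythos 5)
Your proof is correct and takes the same route as the paper, which simply says ``Use Lemma~\ref{lem:L_I} and Definition~\ref{defin:I_O}''; you have spelled out the additive decomposition, the summand-wise application of Lemma~\ref{lem:L_I}, and the bijection between the cross-section (plus the possible fixed label) and $\mathcal I_\OO$ that the paper leaves implicit. The appeal to Lemma~\ref{lem:iso-representatives} to justify independence of the chosen cross-section is a reasonable extra precaution, consistent with the paper's Remark~\ref{rem:iso-choice}.
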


\begin{proof}
	Use Lemma~\ref{lem:L_I} and Definition~\ref{defin:I_O}.
\end{proof}

\begin{thm}
	\label{thm:main}
	With the above notation, we have $A(\OO)\cong \on{End}_{B_G}(T_\OO)$.
\end{thm}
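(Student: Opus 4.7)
The plan is to thread together several identifications already established in the paper: the cluster-tilting realisation of $A(\mathcal P)$ from \cite{BKM16}, the skew-group description of $A(\OO)$ from Proposition~\ref{prop:sgas}, the technical Morita result Lemma~\ref{lem:technical}, and the equivalence $j^\ast\circ E^{-1}$ relating $B_G$-modules to $B$-modules with compatible $G$-action.

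First I would set things up in the framework of Section~5. Write $T_\mathcal P = T_0\oplus M$ with $T_0$ equal to $L_I$ for the invariant label $I$ if the central region of $\mathcal P$ is alternating and $0$ otherwise, and $M = \bigoplus_{g\in G}\tw{g}{T_\mathcal P'}$ the sum over the free $G$-orbits of summands. Taking $S = B$, $M_0 = T_0$, $M_1 = T_\mathcal P'$ puts us exactly under the hypotheses of Lemma~\ref{lem:technical}, and by construction $X = M_0\oplus M = T_\mathcal P$ while $M_0\oplus M_1 = T_\mathcal P^{\on{red}}$.

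Next I would walk $\on{End}_{B_G}(T_\OO)$ over to a skew-group corner. By Lemma~\ref{lem:T}, $T_\OO = j^\ast F T_\mathcal P^{\on{red}}$. Since $e\otimes 1$ is a Morita idempotent in $B\ast G$ (Proposition~\ref{prop:iso-boundary}), $j^\ast$ preserves endomorphism algebras, and combining this with the equivalence $E$ gives
\[
\on{End}_{B_G}(T_\OO)\cong \on{End}_{B\ast G}(F T_\mathcal P^{\on{red}})\cong \on{End}_{\on{mod}(B)^G}(EFT_0\oplus EM) = R_1,
\]
in the notation of Lemma~\ref{lem:technical}. Applying that lemma then yields $R_1\cong R_2 = e(\on{End}_B(T_\mathcal P)\ast G)e$. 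The theorem of Baur-King-Marsh cited in the introduction gives $\on{End}_B(T_\mathcal P)\cong A(\mathcal P)$, so $R_2\cong e(A(\mathcal P)\ast G)e$.

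Finally I would identify this corner with $A(\OO)$ via Proposition~\ref{prop:sgas}. Crucially, the Morita equivalence there is not abstract: its proof invokes the quiver-with-potential construction of \cite{GP19}, which realises $A(\OO)$ concretely as $f(A(\mathcal P)\ast G)f$ for the idempotent $f$ cut out by a chosen system $\mathcal E$ of representatives of the $G$-orbits of vertices of $Q_\mathcal P$. Under the identification $A(\mathcal P)\cong \on{End}_B(T_\mathcal P)$ the vertices of $Q_\mathcal P$ correspond to the indecomposable summands $L_I$, and $\mathcal E$ corresponds to one summand in each free orbit together with the fixed summand $T_0$, i.e.\ precisely to $T_\mathcal P^{\on{red}}$. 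Hence $f$ matches the idempotent $e = \pi_0\otimes 1+\pi_1\otimes 1$ of Lemma~\ref{lem:technical}, and we conclude
\[
A(\OO)\cong e(A(\mathcal P)\ast G)e\cong R_2\cong R_1\cong \on{End}_{B_G}(T_\OO).
\]
The main obstacle is precisely this last matching: verifying that the Morita idempotent implicit in the proof of Proposition~\ref{prop:sgas} (determined by the choice of fundamental domain in $\mathcal P$) really is the idempotent $e$ coming out of Lemma~\ref{lem:technical}. This is conceptually clear, since both pick out a cross-section of the $G$-action together with the fixed point if any, but making it rigorous requires tracing through the \cite{GP19} recipe and the BKM identification together.
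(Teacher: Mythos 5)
Your chain of identifications is essentially the paper's, run in reverse: you set up Lemma~\ref{lem:technical} with the same $M_0 = T_0$, $M_1 = T'_{\mathcal P}$, $X = T_{\mathcal P}$, use Lemma~\ref{lem:T} and the equivalences $E$ and $j^*$ to land on $R_1$, and invoke Lemma~\ref{lem:technical} and the Baur--King--Marsh identification to get to a corner $e(A(\mathcal P)*G)e$. Up to that point the proof is sound. Where you diverge is in how you close: you try to literally match the Morita idempotent implicit in Proposition~\ref{prop:sgas} (coming from the \cite{GP19} construction and the choice of $\mathcal E$) with the idempotent $e = \pi_0\otimes 1 + \pi_1\otimes 1$ of Lemma~\ref{lem:technical}, and you candidly flag that you have not actually carried out this matching.

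That step is both the weakest point of your proposal and, happily, unnecessary. The paper avoids it entirely: it only asserts a \emph{Morita equivalence} $A(\OO)\sim \on{End}_{B_G}(T_\OO)$ obtained by concatenating the Morita equivalence of Proposition~\ref{prop:sgas}, the $G$-equivariance of the BKM isomorphism (giving $A(\mathcal P)*G\cong \on{End}_B(T_{\mathcal P})*G$), Lemma~\ref{lem:technical}, and the equivalences $E$ and $j^*$. It then upgrades this Morita equivalence to an isomorphism by observing that \emph{both algebras are basic}: $A(\OO)$ by construction as the (completed) frozen Jacobian algebra of the quiver $Q_\OO$, and $\on{End}_{B_G}(T_\OO)$ by Lemma~\ref{lem:basic}, which you do not invoke at all. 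Morita-equivalent basic algebras (in this complete, semiperfect setting) are isomorphic, so no idempotent tracking is needed. If you want to complete your proof as written, you would have to verify that the idempotent cut out by a fundamental-domain choice of $\mathcal E$ in the \cite{GP19} recipe, transported across the BKM isomorphism, literally equals $e$ for a compatible choice of cross-section $T'_{\mathcal P}$; this is plausible but is extra work that the basic-ness argument makes redundant. I would recommend replacing your final paragraph with the Lemma~\ref{lem:basic} argument.
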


\begin{proof}
	By Proposition~\ref{prop:sgas}, we know that $A(\OO)$ is Morita equivalent to $A(\mathcal P)*G$, where the action of a generator is given by rotating $n_0=\on{GCD}(k,n)$ steps clockwise. 
We recall the isomorphism $A(\mathcal P)\cong \on{End}_B(T_\mathcal P)$ from~\cite[Section 10]{BKM16}. 
This isomorphism 
arises from sending every arrow $\alpha:I\to J$ in $Q_{\mathcal P}$ with $I,J$ vertices of $\mathcal P$ to the 
(injective) minimal codimension map $L_I\to L_J$ (sending the lattice diagram of the module $L_I$ as high up 
as possible into the lattice diagram of $L_J$). As $\mathcal P$ is $d$-symmetric, 
if $\alpha:I\to J$ is an arrow between two vertices which do not correspond to the central region, it appears with 
$d-1$ ``rotated'' copies: there are arrows $\alpha_m:I+mn_0 \to J+mn_0$ for $m=0,\dots, d-1$ where  
$\alpha_0=\alpha$. 
Similarly, if $I$ corresponds to the central region, there are arrows $\alpha_m:I\to J+mn_0$ for $m=0,\dots, d-1$ or 
if $J$ is at the central region, there are arrows $\alpha_m:I+mn_0 \to J$ for $m=0,\dots, d-1$.
With the action by twists on $\on{End}_B(T_\mathcal P)$ as in Section\ref{ssec:modules-skew}, 
the isomorphism $A(\mathcal P)\cong \on{End}_B(T_\mathcal P)$ is $G$-invariant. 
We get then an isomorphism $A(\mathcal P)*G\cong \on{End}_B(T_\mathcal P)*G$. 
Now we can apply Lemma~\ref{lm:old-lemma-5-3} and Lemma~\ref{lm:morita-equ} 
to $M_0\oplus M=T_\mathcal P=T_0\oplus F(T'_\mathcal P)$, 
so $M_1=T'_\mathcal P$.


We obtain a Morita equivalence
	\[
	\on{End}_B(T_\mathcal P)*G\sim \on{End}_{\on{mod}(B)^G}\left(F(T_0)\oplus F(T'_\mathcal P)\right) = \on{End}_{\on{mod}(B)^G}\left(F(T_\mathcal P^{red})\right)
	\]
		
	The latter is in turn Morita equivalent to $\on{End}_{B*G}(F(T_\mathcal P^{red}))$ and then to 
	$\on{End}_{B_G}(j^*F(T_\mathcal P^{red}))$, since both $E$ and $j^*$ are equivalences.
	Finally, by Lemma~\ref{lem:T}, the latter equals $\on{End}_{B_G}(T_\OO)$.
	We have proved that $A(\OO)\sim \on{End}_{B_G}(T_\OO)$. The statement follows since both algebras are basic (the latter by Lemma~\ref{lem:basic}).
\end{proof}

\begin{rmk}\label{rem:concluding}
	We conclude with a remark motivated by the following question: in~\cite{BKM16}, 
the dimer algebra $A$ is shown to be isomorphic to the endomorphism algebra of a module 
$T$, which is a cluster tilting object in a Frobenius, stably 2-Calabi-Yau category. Is the same 
true in our case?
	By results of Demonet (\cite[\S 2.2.4]{Demonet-2011}), it is indeed the case that the skew 
group category $\on{CM}(B)*G$ of the category of Cohen-Macaulay $B$-modules is 
Frobenius and stably 2-CY, and our module $T_\OO$ does lie in it. Moreover, $F$ maps 
$G$-invariant cluster tilting objects to cluster tilting objects, so indeed $T_\OO$ is cluster tilting. 
	We note however that we do not have a direct description of the category 
$\on{CM}(B)*G$ as (equivalent to) a subcategory of $\on{mod}(B_G)$.
\end{rmk}

\bibliographystyle{alpha}
\bibliography{biblio}

\end{document}